\newcommand{\no}[1]{#1}
\renewcommand{\no}[1]{}
\renewcommand{\Delta}{\upDelta}}
\date{\today}
\newtheorem{theorem}{Theorem}[section]
\newtheorem{proposition}{Proposition}[section]
\newtheorem{lemma}{Lemma}[section]
\newtheorem{corollary}{Corollary}[section]
\theoremstyle{remark}
\newtheorem{remark}{Remark}[section]
\newtheorem{remarks}{Remarks}[section]
\newcommand{\bel}{\begin{equation} \label}
\newcommand{\ee}{\end{equation}}
\newcommand{\dd}{\mathrm d}
\newcommand{\R}{{\mathbb R}}
\newcommand{\N}{{\mathbb N}}
\newcommand{\cB}{{\mathcal B}}
\newcommand{\cH}{{\mathcal H}}
\newcommand{\cS}{{\mathcal S}}
\newcommand{\cT}{{\mathcal T}}
\def\beq{\begin{equation}}
\def\eeq{\end{equation}}
\newcommand{\bea}{\begin{eqnarray}}
\newcommand{\eea}{\end{eqnarray}}
\newcommand{\beas}{\begin{eqnarray*}}
\newcommand{\eeas}{\end{eqnarray*}}
\numberwithin{equation}{section}
\title[partial DN map]{Double logarithmic stability in the identification of a scalar potential by a partial elliptic Dirichlet-to-Neumann map}
\author[Mourad Choulli]{Mourad Choulli}
\address{IECL, UMR CNRS 7502, Universit\'e de Lorraine, Ile du Saulcy, 57045 Metz cedex 1, France}
\email{mourad.choulli@univ-lorraine.fr}
\author[Yavar Kian]{Yavar Kian}
\address{Aix Marseille Universit\'e, CNRS, CPT UMR 7332, 13288 Marseille, France \& Universit\'e de Toulon, CNRS, CPT UMR 7332, 83957 La Garde, France}
\email{yavar.kian@univ-amu.fr}
\author[Eric Soccorsi]{Eric Soccorsi}
\address{Aix Marseille Universit\'e, CNRS, CPT UMR 7332, 13288 Marseille, France \& Universit\'e de Toulon, CNRS, CPT UMR 7332, 83957 La Garde, France}
\email{eric.soccorsi@univ-amu.fr}
\begin{document}

\begin{abstract}
We examine the stability issue in the inverse problem of determining a scalar potential appearing in the stationary Schr\"odinger equation in a bounded domain, from a partial elliptic Dirichlet-to-Neumann map. Namely, the Dirichlet data is imposed on the shadowed face of the boundary of the domain and the Neumann data is measured on its illuminated face. We establish a log log stability estimate for the $L^2$-norm (resp. the $H^{-1}$-norm) of $H^t$, for $t>0$, and bounded (resp. $L^2$) potentials.
\end{abstract}

\maketitle

\tableofcontents


\section{Introduction}
\label{section1}

\subsection{Settings and main result}

In the present paper $\Omega$ is a bounded domain of $\mathbb{R}^n$, $n\geq 3$, with $C^2$ boundary $\Gamma$. We denote by $\nu(x)$ the outward unit normal to $\Gamma$ computed at $x \in \Gamma$. For $\xi \in \mathbb{S}^{n-1}$ fixed, we introduce the two following subsets of $\Gamma$
\bel{a0}
\Gamma_\pm (\xi)=\{ x \in \Gamma;\ \pm \xi \cdot \nu(x) >0 \},
\ee
and denote by $F$ (resp. $G$) an open neighborhood of $\Gamma _+(\xi)$ (resp. $\Gamma _-(\xi)$) in $\Gamma$. In what follows $\Gamma_+(\xi)$ (resp. $\Gamma_-(\xi)$) will sometimes be referred to as the $\xi$-shadowed (resp., $\xi$-illuminated) face of $\Gamma$.
Next, given $q \in L^\infty (\Omega)$, real-valued, we consider the unbounded self-adjoint operator $A_q$ in $L^2(\Omega )$, acting on his domain $D(A_q)=H_0^1(\Omega)\cap H^2(\Omega)$, as
$$
A_q=-\Delta +q.
$$
We assume throughout the entire text that $0$ is in the resolvent set of $A_q$ (i.e. $0$ is not in the spectrum of $A_q$) and put
$$
\mathcal{Q}=\{ q\in L^\infty (\Omega;\R);\ 0\ \mbox{is not an eigenvalue of}\ A_q\}.
$$
We establish in Section 2 for any $q \in \mathcal{Q}$ and $g \in H^{-1/2}(\Gamma)$ that the boundary value problem (abbreviated to BVP in the sequel)
\bel{bvp}
\left\{
\begin{array}{ll}
(-\Delta +q)u=0  & \mbox{in}\ \Omega ,
\\
u=g &\mbox{on}\ \Gamma,
\end{array}
\right.
\ee
admits a unique transposition solution $u\in H_\Delta (\Omega )=\{ w\in L^2(\Omega );\ \Delta w\in L^2(\Omega )\}$ and that the so-called Dirichlet-to-Neumann (DN in short) map
\bel{ol1}
\Lambda_q: g \mapsto \partial _\nu u
\ee
is a bounded operator from $H^{-1/2}(\Gamma)$ into $H^{-3/2}(\Gamma)$. For $q_j \in \mathcal{Q}$, $j=1,2$, we denote by $u_j$ the solution to \eqref{bvp} where $q_j$ is substituted for $q$. Since $u=u_1-u_2$ satisfies
$$
\left\{
\begin{array}{ll}
(-\Delta +q_1)u= (q_2-q_1) u_2  &\mbox{in}\ \Omega,
\\
u= 0 &\mbox{on}\ \Gamma,
\end{array}
\right.
$$
and $(q_2-q_1) u_2 \in L^2(\Omega)$, it holds true that $u \in D(A_{q_1})$. Therefore $\partial_\nu u \in H^{1 \slash 2}(\Gamma)$ and 
\bel{ol2}
\Lambda _{q_1,q_2}=\Lambda _{q_1}- \Lambda _{q_2} \in \mathscr{B}(H^{-1/2}(\Gamma),H^{1/2}(\Gamma )),
\ee
hence the operator
\bel{ol3}
 \widetilde{\Lambda}_{q_1,q_2}:g\in H^{-1/2}(\Gamma )\cap \mathscr{E}'(F)\rightarrow  \Lambda _{q_1,q_2}(g)_{|G},
\ee
is bounded from $H^{-1/2}(\Gamma )\cap \mathscr{E}'(F)$, endowed with the norm of $H^{-1/2}(\Gamma)$, into $H^{1/2}(G)$.
We denote by $\| \widetilde{\Lambda}_{q_1,q_2} \|$ the norm of $\widetilde{\Lambda}_{q_1,q_2}$ in $\mathscr{B}(H^{-1/2}(\Gamma )\cap \mathscr{E}'(F),H^{1/2}(G))$.

In the present paper we examine the stability issue in the inverse problem of determining the potential $q \in \mathcal{Q}$ appearing in \eqref{bvp} from the knowledge of $\widetilde{\Lambda}_{q_0,q}$, where $q_0$ is an {\it priori} known suitable potential of $\mathcal{Q}$.

Upon denoting by $B_X$ the unit ball of any Banach space $X$, we may now state the main result of this article as follows.
 
\begin{theorem}
\label{thm-main}
For any $\delta>0$ and $t>0$ we may find two constants $c>0$ and $\tilde{c}>0$, both of them depending only on $\delta$ and $t$, such that we have
\begin{equation}
\label{1.1}
\| q_1-q_2\|_{L^2(\Omega )}\leq c \left( \| \widetilde{\Lambda}_{q_1,q_2}\| + \left|\ln \tilde{c}\left|\ln \| \widetilde{\Lambda}_{q_1,q_2}\| \right|\right|^{-t}\right),
\end{equation}
for any $q_1, q_2 \in \mathcal{Q}\cap \delta B_{L^\infty (\Omega )}$ satisfying $(q_2-q_1)\chi_\Omega \in \delta B_{H^t(\mathbb{R}^n)}$, and
\begin{equation}
\label{1.2}
\| q_1-q_2 \|_{H^{-1}(\Omega )}\leq c \left( \| \widetilde{\Lambda}_{q_1,q_2}\| + \left|\ln \tilde{c}\left|\ln \| \widetilde{\Lambda}_{q_1,q_2}\| \right|\right|^{-1}\right),
\end{equation}
for any $q_1, q_2\in \mathcal{Q}\cap \delta B_{L^2 (\Omega )}$.
 \end{theorem}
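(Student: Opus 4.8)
The plan is to adapt the complex geometric optics (CGO) method of Sylvester--Uhlmann to the partial‑data setting by means of a boundary Carleman estimate, and then to pass from a frequency‑localized bound to the full norm by an analytic continuation argument in Fourier space. Write $\zeta\in\R^n$ for the frequency variable (to avoid clash with the fixed direction $\xi$).

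\emph{Step 1 (CGO solutions and the orthogonality identity).} Fix $\zeta\in\R^n$ with $|\zeta\cdot\xi|$ bounded and a large parameter $\tau>0$, and choose $\rho_1,\rho_2\in\C^n$ with $\rho_j\cdot\rho_j=0$, $\mathrm{Re}\,\rho_1=-\mathrm{Re}\,\rho_2$ of modulus $\tau$ and essentially parallel to $\xi$, and $\rho_1+\rho_2=-i\zeta$; this is possible once $|\zeta|\le c\tau$, using $n\ge 3$. Using the Carleman estimate for $-\Delta$ with the linear weight $x\mapsto\tau\,\xi\cdot x$ I would construct (\`a la Bukhgeim--Uhlmann) a solution $u_1\in H_\Delta(\Omega)$ of $(-\Delta+q_1)u_1=0$ of the form $u_1=e^{\rho_1\cdot x}(1+\psi_1)$ whose trace \emph{vanishes outside a compact subset of $F$ on $\Gamma$}, together with an ordinary CGO solution $u_2=e^{\rho_2\cdot x}(1+\psi_2)$ of $(-\Delta+q_2)u_2=0$, both with remainder and trace estimates of the type $\|\psi_j\|_{L^2(\Omega)}+\tau^{-1}\|\psi_j\|_{H^1(\Omega)}=o(1)$ as $\tau\to\infty$, with constants depending only on $\delta$. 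Setting $w=u_1-\widetilde u_1$, where $\widetilde u_1$ is the $q_2$‑solution with Dirichlet data $u_1{}_{|\Gamma}$, one gets $(-\Delta+q_2)w=(q_2-q_1)u_1\in L^2(\Omega)$, $w{}_{|\Gamma}=0$, hence $\partial_\nu w=\Lambda_{q_1,q_2}(u_1{}_{|\Gamma})\in H^{1/2}(\Gamma)$, and Green's formula yields
\begin{equation*}
\int_\Omega (q_1-q_2)\,u_1u_2\,\dd x=\int_\Gamma \Lambda_{q_1,q_2}(u_1{}_{|\Gamma})\,u_2{}_{|\Gamma}\,\dd\sigma .
\end{equation*}
Since $u_1u_2=e^{-i\zeta\cdot x}(1+\psi_1+\psi_2+\psi_1\psi_2)$, the left‑hand side equals $\widehat{(q_1-q_2)\chi_\Omega}(\zeta)$ up to an error that tends to $0$ with $\tau$.

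\emph{Step 2 (the partial measurement: the main obstacle).} Because $u_1{}_{|\Gamma}\in H^{-1/2}(\Gamma)\cap\mathscr{E}'(F)$, one has $\Lambda_{q_1,q_2}(u_1{}_{|\Gamma}){}_{|G}=\widetilde\Lambda_{q_1,q_2}(u_1{}_{|\Gamma})$, so the part $\int_G$ of the boundary integral is bounded by $\|\widetilde\Lambda_{q_1,q_2}\|\,\|u_1{}_{|\Gamma}\|_{H^{-1/2}(\Gamma)}\,\|u_2{}_{|\Gamma}\|_{H^{1/2}(G)}\le C e^{C\tau}\,\|\widetilde\Lambda_{q_1,q_2}\|$, the exponential coming from the CGO traces. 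The remaining part $\int_{\Gamma\setminus G}$ is carried, by the choice of $F$ and $G$, by $\overline{\Gamma_+(\xi)}$; to control it I would invoke a \emph{boundary Carleman estimate} for $-\Delta$ with the weight $-\tau\,\xi\cdot x$. Since $\partial_\nu(\xi\cdot x)=\xi\cdot\nu$ keeps a fixed sign on each of $\Gamma_\pm(\xi)$, such an estimate bounds $\int_{\Gamma_+(\xi)}|\xi\cdot\nu|\,|\partial_\nu w|^2e^{-2\tau\xi\cdot x}\,\dd\sigma$ by $C\tau^{-1}\|e^{-\tau\xi\cdot x}(q_2-q_1)u_1\|_{L^2(\Omega)}^2$ — which is $O(\tau^{-1}\delta^2)$ thanks to $\mathrm{Re}\,\rho_1\parallel\tau\xi$ — plus $\int_{\Gamma_-(\xi)}|\xi\cdot\nu|\,|\partial_\nu w|^2e^{-2\tau\xi\cdot x}\,\dd\sigma$, and the latter face lies in $G$, hence is again estimated through $\|\widetilde\Lambda_{q_1,q_2}\|$ as above. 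A Cauchy--Schwarz inequality against $u_2{}_{|\Gamma}$ (whose weighted $L^2(\Gamma)$‑norm is bounded, since $\mathrm{Re}\,\rho_2=-\tau\xi$) then gives
\begin{equation*}
\bigl|\,\widehat{(q_1-q_2)\chi_\Omega}(\zeta)\,\bigr|\le C\bigl(e^{C\tau}\,\|\widetilde\Lambda_{q_1,q_2}\|+r(\tau)\bigr),\qquad |\zeta\cdot\xi|\le C_0,\ |\zeta|\le c\tau,
\end{equation*}
with $r(\tau)\to 0$ and $C,C_0,c$ depending only on $\delta$. Making this reduction rigorous — the boundary Carleman estimate with the correct bookkeeping of the weights on $\Gamma_\pm(\xi)$, and the construction of a CGO solution for $q_1$ whose trace sits in $F$ — is where the geometry ($\Gamma_+(\xi)\subset F$, $\Gamma_-(\xi)\subset G$) and the choice of phases enter in an essential way, and I expect it to be the hardest part.

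\emph{Step 3 (analytic continuation and optimization).} The bound of Step 2 only covers the slab $\{|\zeta\cdot\xi|\le C_0\}$, the restriction being forced by $\mathrm{Re}\,\rho_1\parallel\xi$. Since $\Omega$ is bounded, $F(\zeta):=\widehat{(q_1-q_2)\chi_\Omega}(\zeta)$ extends to an entire function on $\C^n$ with $|F(\zeta)|\le\|q_1-q_2\|_{L^1(\Omega)}\,e^{M|\mathrm{Im}\,\zeta|}$ (Paley--Wiener) and $|F|\le\|q_1-q_2\|_{L^1(\Omega)}\le C\delta$ on $\R^n$; freezing the component of $\zeta$ orthogonal to $\xi$ and applying the two‑constants (harmonic measure) theorem in the complex variable $\zeta\cdot\xi$ propagates the estimate from the slab to the ball $\{|\zeta|\le R\}$, at the cost of a factor $\sim R^{-1}$ in the exponent of smallness. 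Choosing $\tau\sim|\ln\|\widetilde\Lambda_{q_1,q_2}\||$ neutralizes the term $e^{C\tau}\|\widetilde\Lambda_{q_1,q_2}\|$; then the Fourier splitting
\begin{equation*}
\|q_1-q_2\|_{L^2(\Omega)}^2\le (2\pi)^{-n}\Bigl(\,\int_{|\zeta|\le R}|F(\zeta)|^2\,\dd\zeta+R^{-2t}\,\|(q_1-q_2)\chi_\Omega\|_{H^t(\R^n)}^2\Bigr)
\end{equation*}
with the optimal choice $R\sim\ln\bigl|\ln\|\widetilde\Lambda_{q_1,q_2}\|\bigr|$ yields \eqref{1.1}. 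For \eqref{1.2} one replaces the $H^t$ a priori bound by the trivial estimate of the high‑frequency tail in $H^{-1}$, which accounts both for the weaker left‑hand norm and for the exponent $1$ in place of $t$; here the CGO construction of Step 1 must be carried out for merely $L^2$ potentials, which forces one to work in $L^p$‑ or weighted $L^2$‑spaces (Kenig--Ruiz--Sogge type estimates), the rest of the argument being unchanged. The regime where $\|\widetilde\Lambda_{q_1,q_2}\|$ is not small is covered trivially by the a priori bounds, which completes the plan.
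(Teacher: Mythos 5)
Your overall architecture --- CGO solutions, an Alessandrini-type identity controlled by $\| \widetilde{\Lambda}_{q_1,q_2}\|$, quantitative analytic continuation in Fourier space, then optimization in $\tau$ and $R$ --- is the paper's, and your Step 2 is a legitimate variant of its Lemma \ref{lm-ale}. The paper makes \emph{both} CGO traces vanish on prescribed parts of $\Gamma$ (Proposition \ref{pr-cgo}, obtained from the Bukhgeim--Uhlmann Carleman estimate by a transposition/Hahn--Banach argument), so that in \eqref{4.2} the test trace $t_0u_1$ is supported in $G$ and the whole boundary term is bounded at once by $\| \widetilde{\Lambda}_{q_1,q_2}\|$. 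You localize only $u_1{}_{|\Gamma}$ in $F$ and recover the unmeasured Neumann data on $\Gamma\setminus G$ by applying the boundary Carleman estimate to $w$ itself, in the spirit of Heck--Wang. Both routes rest on the same Carleman inequality and give the same bound $C(e^{2d\tau}\| \widetilde{\Lambda}_{q_1,q_2}\|+\tau^{-1/2})$; for yours to close you must note that $\Gamma\setminus G$ stays at positive distance from the equator $\{\xi\cdot\nu=0\}$, so the boundary weight $|\xi\cdot\nu|$ is bounded below there, and that $\psi_2$ carries an $H^1(\Omega)$ bound so that its trace on $\Gamma$ is controlled --- both are available.

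The genuine gap is in Step 3. With $\mathrm{Re}\,\rho_1=-\mathrm{Re}\,\rho_2$ exactly parallel to $\xi$ you only reach frequencies in the hyperplane $\xi^\perp$, a null set; and even granting a slab $\{|\zeta\cdot\xi|\le C_0\}$ of fixed width, the two-constants theorem in the variable $\zeta\cdot\xi$ propagates smallness from a disk of radius $C_0$ to a disk of radius $R$ with a harmonic-measure exponent that decays with $R$ (like $1/\ln R$; certainly not uniformly). Feeding an exponent $\mu(R)\to0$ into the final balance $R^{n}e^{CR}\eta^{\mu(R)}+M^2R^{-2t}$ forces $R=o\bigl(\ln\bigl|\ln\| \widetilde{\Lambda}_{q_1,q_2}\|\bigr|\bigr)$ and yields a strictly smaller power of $\ln\bigl|\ln\| \widetilde{\Lambda}_{q_1,q_2}\|\bigr|$ than the exponent $t$ claimed in \eqref{1.1}. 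The cure --- and the whole point of Lemma \ref{lm-zeta} and of the set $\cB_\epsilon$ --- is that the real parts of the phases need only be $\epsilon$-close to $\xi$, since $F$ and $G$ are neighborhoods of the closed faces; the reachable frequencies then form a cone of fixed aperture, hence a fixed positive proportion of every ball $R\mathbb{B}$, and one may apply a propagation-of-smallness estimate for analytic functions (Theorem \ref{thm-cpl}, from \cite{AEWZ}, applied to $\kappa\mapsto\widehat q(r\kappa)$) whose exponent $\theta$ depends only on that proportion, uniformly in $R$; only the multiplicative constant $e^{(1-\theta)R}$ degrades, and the optimization then delivers the full rate. Finally, for \eqref{1.2} no Kenig--Ruiz--Sogge machinery is needed: the potentials still lie in $\mathcal{Q}\subset L^\infty(\Omega)$, only the a priori bound is taken in $L^2$, and the paper simply replaces the $H^t$ tail estimate by the trivial $H^{-1}$ one.
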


Let us now briefly comment on Theorem \ref{thm-main}.
 
\begin{remarks}
\label{remarks1.1}
(a) We suppose in Theorem \ref{thm-main} that $q_j$, $j=1,2$, are real-valued but it is not hard to see that the statement can be adapted at the expense of greater technical difficulties, to the case of complex-valued potentials. Nevertheless, for the sake of clarity, we shall restrict ourselves to real-valued potentials in the remaining part of this text.\\
\noindent (b) For $s>n/2$ and $\epsilon \in (0,s-n/2)$ we recall from the interpolation theorem \cite[Theorem 12.4, page 73]{LM} that $H^{n/2+\epsilon}(\Omega )=[H^s(\Omega ),H^{-1}(\Omega )]_\theta$ with $\theta =(s-(n/2+\epsilon)) \slash (s+1)$. Therefore we have
$$
\|q_1-q_2\|_{L^\infty (\Omega )} \leq  C(s)\|q_1-q_2\|_{H^s(\Omega )}^{1-\theta}\|q_1-q_2\|_{H^{-1}(\Omega )}^\theta  
\leq C(s) \delta^{1-\theta}\|q_1-q_2\|_{H^{-1}(\Omega )}^\theta,
$$
for any $q_1, q_2\in \mathcal{Q}$ such that $q_2 \in q_1+ \delta B_{H^s(\Omega )}$, and some constant $C(s)>0$, depending only on $s$. From this and \eqref{1.2} then follows that
\begin{equation}\label{1.3}
\| q_1-q_2 \|_{L^\infty (\Omega )}\leq c \left( \| \widetilde{\Lambda}_{q_1,q_2}\| + \left|\ln \tilde{c}\left|\ln \| \widetilde{\Lambda}_{q_1,q_2}\| \right|\right|^{-1}\right)^\theta.
\end{equation}
\noindent (c) Fix $t \in (0,+\infty)$. Then, arguing as in the derivation of \eqref{1.3}, we find two positive constants $c$ and $\tilde{c}$ such that the estimate
$$
\| q_1-q_2 \|_{L^2 (\Omega )}\leq c \left( \| \widetilde{\Lambda}_{q_1,q_2}\| + \left|\ln \tilde{c}\left|\ln \| \widetilde{\Lambda}_{q_1,q_2}\| \right|\right|^{-1}\right)^{\frac{t}{t+1}},
$$
holds uniformly in $q_1, q_2 \in \mathcal{Q}$ obeying $q_2 \in q_1+ \delta B_{H^t(\Omega )}$.
However in the particular case where $\partial \Omega$ is $C^{[t]+1}$, we point out that the above estimate is weaker than \eqref{1.1}.
\end{remarks}

\subsection{State of the art and comments}

The celebrated inverse problem of determining $q$ from the knowledge of $\Lambda_q$ was first proposed (in a slightly different setting) by Calder\'on in \cite{C}. The uniqueness issue was treated by Sylvester and Uhlmann in \cite{SU} and a log-type stability estimate was derived by Alessandrini in \cite{A}. As shown by Mandache in \cite{M}, this log-type estimate is optimal. 

All the above mentioned results were obtained with the full data, i.e. when measurements are performed on the whole boundary $\Gamma$. 
Taking the Neumann data on $\Gamma_-(\xi)$, while the Dirichlet data is imposed on the whole boundary $\Gamma$, Bukhgeim and Uhlmann proved in \cite{BU} that partial information of $\Lambda_q$ still determines uniquely the potential. 
Their result was improved by Kenig, Sj\"ostrand and Uhlmann in \cite{KSU} by measuring the Dirichlet data on the shadowed face of $\Gamma$ and the Neumann data on the illuminated one. Moreover a reconstruction result was derived by Nachman and Street in \cite{NS} from the same data as in \cite{KSU}.

Stability estimates with partial data go back to Heck and Wang's article \cite{HW}, where the $L^\infty(\Omega)$-norm of $q$ is log log stably recovered from $\Lambda_q$ with partial Neumann data. The same type of estimate was derived in \cite{CDR2}. Both papers require that the Dirichlet data be known on the whole boundary. This constraint was weakened by Caro, Dos Santos Ferreira and Ruiz in \cite{CDR1}. These authors proved log-log stability of $q$ with respect to a partial DN map associated with Dirichlet (resp. Neumann) data measured on a neighborhood of $\cup_{\xi \in N} \Gamma_-(\xi)$ (resp. $\cup_{\xi \in N} \Gamma_+(\xi)$) where $N$ in a subset of $\mathbb{S}^{n-1}$. Their result, which is similar to \eqref{1.1}, is established for the $L^p$-norm, $p \in (1,+\infty)$, of bounded and $W^{\lambda,p}$-potentials $q$ with $\lambda \in (0,1 \slash p]$. Therefore \eqref{1.2} is valid for a wider class of allowable potentials than in \cite{CDR1}.

The derivation of Theorem \ref{thm-main} relies on complex geometrical optics (CGO in short) solutions to \eqref{bvp} and the Carleman inequality established by Bukhgeim and Uhlmann in \cite{BU}. These are the two main ingredients of the analysis carried out in \cite{HW}, but in contrast to \cite{HW}, we use here this Carleman estimate to construct CGO solutions vanishing on a definite part of the boundary $\Gamma$. 

Notice that usual stability estimates in the inverse problem of determining a potential from the full DN map are of log type, while they are of log log type for partial data. Indeed, it turns out that the low frequencies of the Fourier transform of the potential are bounded uniformly in all directions by the DN map, but that this is no longer the case with the partial data.
This technical issue can be remedied by using the analytic properties of the Fourier transform. The additional log in the stability estimate for the potential may thus be seen as the price to pay for recovering this analytic function by its values in a subdomain, which is an ill-posed problem.
 
\subsection{Outline}
The paper is organized as follows. In Sect. \ref{section2} we introduce the transposition solution associated with the BVP \eqref{bvp} and rigorously define the various DN maps required by the analysis of the inverse problem. Sect. \ref{section3} is devoted to building CGO solutions that vanish on some part of the boundary $\Gamma$. These functions are useful for the proof of Theorem \ref{thm-main}, given in Sect. \ref{section4}.

\section{Transposition solutions}
\label{section2}
In this section we define the transposition solution to the BVP associated with suitable data $(f,g)$,
\bel{a1}
\left\{
\begin{array}{ll}
(-\Delta  +q)u=f\quad &\mbox{in}\ \Omega,
\\
u=g &\mbox{on}\ \Gamma,
\end{array}
\right.
\ee
which play a pivotal role in the analysis of the inverse problem carried out in this paper. To this purpose we start by recalling two useful results for the Hilbert space $H_\Delta (\Omega ) = \{ u\in L^2(\Omega);\ \Delta u\in L^2(\Omega) \}$ endowed with its natural norm
$$
\|u\|_{H_\Delta (\Omega)}=\left(\|u\|_{L^2(\Omega )}^2+ \|\Delta u\|_{L^2(\Omega )}^2\right)^{1/2}.
$$
The first result is the following {\it trace theorem}, borrowed from \cite[Lemma 1.1]{BU}.
\begin{lemma}
\label{lemma2.1}
For $j=0,1$, the trace map
$$
t _ju=\partial ^j_\nu u_{|\Gamma},\ u \in \mathscr{D}(\overline{\Omega }),
$$
extends to a continuous operator, still denoted by $t_j$, from $H_\Delta (\Omega)$ into $H^{-j-1/2}(\Gamma)$. Namely, there exists $c_j>0$, such that the estimate
$$
\|t _ju\|_{H^{-j-1/2}(\Gamma )}\leq c_j \|u\|_{H_\Delta(\Omega )},
$$
holds for every $u \in H_\Delta (\Omega)$.
\end{lemma}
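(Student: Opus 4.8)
The plan is to prove the two trace estimates by duality, reducing each of them to a single application of Green's second identity in which the test function is a suitably chosen $H^2(\Omega)$-lifting of prescribed Cauchy data on $\Gamma$. Since $\Gamma$ is of class $C^2$, the Cauchy trace operator
$$
v \longmapsto (v_{|\Gamma},\,\partial_\nu v_{|\Gamma}),\qquad H^2(\Omega)\longrightarrow H^{3/2}(\Gamma)\times H^{1/2}(\Gamma),
$$
is bounded, onto, and admits a bounded right inverse; I shall also use the standard identification $H^{-s}(\Gamma)=\big(H^s(\Gamma)\big)'$ for $s\in\{1/2,3/2\}$, with duality pairing $\langle\cdot,\cdot\rangle$ extending the $L^2(\Gamma)$ scalar product, together with Green's identity
$$
\int_\Omega\big(u\,\Delta v - v\,\Delta u\big)\,\dd x = \int_\Gamma\big(u\,\partial_\nu v - v\,\partial_\nu u\big)\,\dd\sigma ,
$$
valid for $u,v\in\mathscr{D}(\overline\Omega)$.

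For the Dirichlet trace $t_0$, I would fix $\phi\in H^{1/2}(\Gamma)$ and use the bounded right inverse above to choose $v_\phi\in H^2(\Omega)$ with $v_\phi=0$ and $\partial_\nu v_\phi=\phi$ on $\Gamma$, and $\|v_\phi\|_{H^2(\Omega)}\le C\|\phi\|_{H^{1/2}(\Gamma)}$. For $u\in\mathscr{D}(\overline\Omega)$, Green's identity and $v_\phi=0$ on $\Gamma$ give $\int_\Gamma u\,\phi\,\dd\sigma=\int_\Omega(u\,\Delta v_\phi - v_\phi\,\Delta u)\,\dd x$, whence
$$
\Big|\int_\Gamma u\,\phi\,\dd\sigma\Big|\le \|u\|_{L^2(\Omega)}\|\Delta v_\phi\|_{L^2(\Omega)} + \|v_\phi\|_{L^2(\Omega)}\|\Delta u\|_{L^2(\Omega)}\le C\,\|\phi\|_{H^{1/2}(\Gamma)}\,\|u\|_{H_\Delta(\Omega)} .
$$
Taking the supremum over $\phi$ in the unit ball of $H^{1/2}(\Gamma)$ yields $\|t_0u\|_{H^{-1/2}(\Gamma)}\le C\|u\|_{H_\Delta(\Omega)}$. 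The Neumann trace $t_1$ is handled symmetrically: given $\psi\in H^{3/2}(\Gamma)$, lift $(\psi,0)$ to $v_\psi\in H^2(\Omega)$ with $v_\psi=\psi$, $\partial_\nu v_\psi=0$ on $\Gamma$ and $\|v_\psi\|_{H^2(\Omega)}\le C\|\psi\|_{H^{3/2}(\Gamma)}$; Green's identity and $\partial_\nu v_\psi=0$ on $\Gamma$ give $\int_\Gamma \psi\,\partial_\nu u\,\dd\sigma=\int_\Omega(v_\psi\,\Delta u - u\,\Delta v_\psi)\,\dd x$ for $u\in\mathscr{D}(\overline\Omega)$, and the same estimate produces $\|t_1u\|_{H^{-3/2}(\Gamma)}\le C\|u\|_{H_\Delta(\Omega)}$.

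It remains to pass from $u\in\mathscr{D}(\overline\Omega)$ to a general $u\in H_\Delta(\Omega)$. One option is to invoke the density of $\mathscr{D}(\overline\Omega)$ in $H_\Delta(\Omega)$ for the graph norm and extend the estimates by continuity. A route that sidesteps density altogether is to observe that, for $u\in H_\Delta(\Omega)$, the quantity $\int_\Omega(u\,\Delta v - v\,\Delta u)\,\dd x$ depends only on the Cauchy data $(v_{|\Gamma},\partial_\nu v_{|\Gamma})$ of $v\in H^2(\Omega)$: two liftings of the same Cauchy data differ by an element of $H^2_0(\Omega)=\{w\in H^2(\Omega):w_{|\Gamma}=\partial_\nu w_{|\Gamma}=0\}$, on which the integrand has vanishing integral by the very definition of $\Delta u$ as a distribution. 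This defines bounded linear forms on $H^{1/2}(\Gamma)$ and on $H^{3/2}(\Gamma)$, i.e. elements $t_0u\in H^{-1/2}(\Gamma)$ and $t_1u\in H^{-3/2}(\Gamma)$, which agree with the classical traces on $\mathscr{D}(\overline\Omega)$ and satisfy the announced estimates with $c_0,c_1$ depending only on the operator norm of the right inverse of the Cauchy trace map and on $\Omega$. Since the statement is classical, there is no real obstacle: the only non-elementary inputs are that right-inverse property on a $C^2$ domain and the duality description of the negative-order Sobolev spaces on $\Gamma$, and the one point genuinely requiring care is the bookkeeping of the pairing conventions, so that Green's identity is correctly interpreted once $u$ is merely in $H_\Delta(\Omega)$.
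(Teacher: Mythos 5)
Your proof is correct. The paper does not prove this lemma itself---it is quoted verbatim from \cite[Lemma 1.1]{BU}---and your duality argument (lifting prescribed Cauchy data from $H^{3/2}(\Gamma)\times H^{1/2}(\Gamma)$ to $H^2(\Omega)$ by the right inverse of the trace map and applying Green's identity) is precisely the standard proof given in that reference, so it matches the intended argument.
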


Let us denote by $\langle \cdot ,\cdot\rangle_{j+1/2}$, $j=0,1$, the duality pairing between $H^{j+1/2}(\Gamma)$ and $H^{-j-1/2}(\Gamma)$, where the second argument is conjugated.
Then we have the following {\it generalized Green formula}, which can be found in \cite[Corollary 1.2]{BU}. 
\begin{lemma}
\label{lm2}
Let $q$ be in $L^{\infty}(\Omega)$. Then, for any $u\in H_\Delta (\Omega )$ and $v\in H^2(\Omega )$, we have
$$
\int_\Omega (\Delta -q)u\overline{v}\dd x=\int_\Omega u\overline{(\Delta -q)v}\dd x+\langle t _1 u , t _0 v \rangle_{3/2} - \langle t _0 u , t _1 v \rangle_{1/2}.
$$
\end{lemma}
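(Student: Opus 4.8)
The asserted formula is sesquilinear, and each of its four terms is, for fixed $v\in H^2(\Omega)$, continuous in $u$ for the $H_\Delta(\Omega)$-norm and, for fixed $u\in H_\Delta(\Omega)$, continuous in $v$ for the $H^2(\Omega)$-norm. This observation dictates the strategy: prove the identity first for smooth $u$ and $v$, where it is the classical second Green formula, and then extend it to general $u\in H_\Delta(\Omega)$ and $v\in H^2(\Omega)$ by density and continuity.

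First I would dispose of the zeroth-order term. Since $q$ is real-valued and $qu,qv\in L^2(\Omega)$ (because $q\in L^\infty(\Omega)$ and $u,v\in L^2(\Omega)$), we have $qu\,\overline v=u\,\overline{qv}$ almost everywhere in $\Omega$, so the contributions of $q$ to the two volume integrals coincide and cancel. It therefore suffices to prove the pure second Green identity for the Laplacian, namely
\[
\int_\Omega \Delta u\,\overline v\,\dd x=\int_\Omega u\,\overline{\Delta v}\,\dd x+\langle t_1u,t_0v\rangle_{3/2}-\langle t_0u,t_1v\rangle_{1/2},
\]
where I have used that $\overline{\Delta v}=\Delta\overline v$. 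For $u,v\in\mathscr D(\overline\Omega)$ this is obtained by integrating by parts twice via the divergence theorem; the resulting boundary term is $\int_\Gamma(\partial_\nu u\,\overline v-u\,\overline{\partial_\nu v})\,\dd\sigma$, and since all traces are then smooth this surface integral coincides with $\langle t_1u,t_0v\rangle_{3/2}-\langle t_0u,t_1v\rangle_{1/2}$, the duality pairings being the continuous extensions of the $L^2(\Gamma)$ inner product.

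It then remains to pass from smooth functions to the general case. I would estimate each term: the two volume integrals are bounded by $\|(\Delta-q)u\|_{L^2(\Omega)}\|v\|_{L^2(\Omega)}$ and $\|u\|_{L^2(\Omega)}\|(\Delta-q)v\|_{L^2(\Omega)}$, hence are continuous in $(u,v)$ for the $H_\Delta(\Omega)\times H^2(\Omega)$ topology; the boundary pairings are controlled, using the duality bound $|\langle f,g\rangle|\le\|f\|\,\|g\|$ together with Lemma \ref{lemma2.1}, by
\[
|\langle t_1u,t_0v\rangle_{3/2}|\le c_1\|u\|_{H_\Delta(\Omega)}\|t_0v\|_{H^{3/2}(\Gamma)},\qquad |\langle t_0u,t_1v\rangle_{1/2}|\le c_0\|u\|_{H_\Delta(\Omega)}\|t_1v\|_{H^{1/2}(\Gamma)},
\]
where $\|t_0v\|_{H^{3/2}(\Gamma)}$ and $\|t_1v\|_{H^{1/2}(\Gamma)}$ depend continuously on $v$ in the $H^2(\Omega)$-norm by the standard trace theorem. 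Choosing $u_k,v_k\in\mathscr D(\overline\Omega)$ with $u_k\to u$ in $H_\Delta(\Omega)$ and $v_k\to v$ in $H^2(\Omega)$, I would write the identity for $(u_k,v_k)$ and let $k\to\infty$, each term converging to its counterpart by the continuity just recorded.

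The main obstacle is the one ingredient of the limiting argument that is not a routine estimate: the density of $\mathscr D(\overline\Omega)$ in $H_\Delta(\Omega)$ for the graph norm $\|\cdot\|_{H_\Delta(\Omega)}$, which is what legitimizes the approximation of a general $u$. I would point out that this density is precisely the fact underlying the extension statement of Lemma \ref{lemma2.1}, where the trace maps $t_0,t_1$ are \emph{defined} on $H_\Delta(\Omega)$ by continuous extension from $\mathscr D(\overline\Omega)$, so it may be invoked directly here; the density of $\mathscr D(\overline\Omega)$ in $H^2(\Omega)$ for a $C^2$ domain is classical. Granting these two density statements, all the remaining steps reduce to the continuity estimates displayed above.
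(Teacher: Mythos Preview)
The paper does not supply its own proof of this lemma; it merely cites \cite[Corollary~1.2]{BU}. Your argument is correct and is exactly the standard route to such a generalized Green formula: verify the identity for $u,v\in\mathscr D(\overline\Omega)$ via the classical second Green formula, observe that each of the four terms is separately continuous in $u$ for the $H_\Delta(\Omega)$-norm (using Lemma~\ref{lemma2.1} for the two boundary pairings) and in $v$ for the $H^2(\Omega)$-norm (using the standard $H^2$ trace theorem), and then pass to the limit by density. Your identification of the density of $\mathscr D(\overline\Omega)$ in $H_\Delta(\Omega)$ as the one nontrivial ingredient is apt, and your remark that this density is already implicit in the very formulation of Lemma~\ref{lemma2.1} (where $t_0,t_1$ are \emph{defined} as continuous extensions from $\mathscr D(\overline\Omega)$) is exactly right. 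The cancellation of the $q$-terms indeed uses that $q$ is real-valued, which is the standing assumption throughout the paper.
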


Let $q \in \mathcal{Q}$. By the usual $H^2$-regularity property for elliptic BVPs (see e.g. \cite[Theorem 5.4, page 165]{LM}), we know that for each $f\in L^2(\Omega )$ and $g\in H^{3/2}(\Gamma )$, there exists a unique solution $\mathcal{S}_q(f,g) \in H^2(\Omega)$ to \eqref{a1}. Moreover the linear operator $\mathcal{S}_q$
is bounded from $L^2(\Omega )\times H^{3/2}(\Gamma)$ into $H^2(\Omega)$, i.e. there exists a constant $C>0$ such that we have
\bel{a2}
\| \mathcal{S}_q(f,g) \|_{H^2(\Omega)} \leq C \left( \| f \|_{L^2(\Omega)} + \| g \|_{H^{3 \slash 2}(\Gamma)} \right). 
\ee
For further reference we put $\mathcal{S}_{q,0}(f)=\mathcal{S}_q(f,0)$ and $\mathcal{S}_{q,1}(g)=\mathcal{S}_q(0,g)$, so we have $\mathcal{S}_q(f,g)=\mathcal{S}_{q,0}(f)+\mathcal{S}_{q,1}(g)$ from the linearity of $\mathcal{S}_q$.

Next, applying Lemma \ref{lm2}, we get for all $(f,g) \in L^2(\Omega) \times H^{3/2}(\Gamma )$ and $v \in \mathcal{H}=H_0^1(\Omega)\cap H^2(\Omega)$ that
\begin{equation}
\label{2.1}
-\int_\Gamma g\partial _\nu \overline{v}\dd \sigma (x)+\int_\Omega f\overline{v}\dd x =\int_\Omega u\overline{(-\Delta + q)v}\dd x,\ u=\mathcal{S}_q(f,g).
\end{equation}
In view of the left hand side of \eqref{2.1} we introduce the following continuous anti-linear form on $\mathcal{H}$
\bel{a2b}
\ell (v)=-\int_\Gamma g \partial _\nu \overline{v}\dd \sigma (x)+\int_\Omega f\overline{v}\dd x,\ v\in \mathcal{H}.
\ee
In light of \eqref{a2}, the operator $L=\ell \circ \mathcal{S}_{q,0}$ is bounded in $L^2(\Omega)$. Further, with reference to \eqref{a2b}, we generalize the definition of the anti-linear form $\ell$ to $(f,g) \in \cH^* \times H^{-1 \slash 2}(\Gamma)$, upon setting
$$
\ell (v)=-\langle g, \partial _\nu v\rangle_{1/2} + \langle f,v\rangle,\ v \in \cH,
$$
where $\langle \cdot ,\cdot\rangle$ denotes the duality pairing between $\mathcal{H}$ and $\mathcal{H}^*$, conjugate linear in its second argument. For any $h \in L^2(\Omega)$, $L(h)=\ell \left( \cS_{q,0}(h) \right)$ satisfies
\begin{equation}
\label{2.1b}
|L(h)|\leq \|g\|_{H^{-1/2}(\Gamma)}\|\partial_\nu \cS_{q,0}(h) \|_{H^{1/2}(\Gamma )}+ \|f\|_{\mathcal{H}^*}\|\cS_{q,0}(h) \|_{\mathcal{H}} \leq C \left(\|g\|_{H^{-1/2}(\Gamma )}+ \|f\|_{\mathcal{H}^\ast}\right) \|h\|_{L^2(\Omega )},
\end{equation}
according to \eqref{a2}. Hence $L$ is a continuous anti-linear form on $L^2(\Omega)$. By Riesz representation theorem, there is a unique vector $\cS_q^t(f,g)\in L^2(\Omega)$ such that
we have
$$ -\langle g,\partial_\nu \cS_{q,0}(h) \rangle_{1/2}+ \langle f, \cS_{q,0}(h) \rangle=\int_\Omega \cS_q^t(f,g) \overline{h} \dd x,\ h \in L^2(\Omega). $$
Bearing in mind that $A_q$ is boundedly invertible in $L^2(\Omega)$ (since $0$ is in the resolvent set of $A_q$) and that $\cS_{q,0}=A_q^{-1}$, we obtain upon taking $h=A_q v$ in above identity, where $v$ is arbitrary in $\cH$, that
\begin{equation}\label{2.2}
-\langle g,\partial_\nu v\rangle_{1/2}+ \langle f,v\rangle=\int_\Omega \cS_q^t(f,g) \overline{(-\Delta +q)v}\dd x,\ v \in \cH. 
\end{equation}
Moreover \eqref{2.1b} entails
\bel{a3}
\| \cS_q^t(f,g) \|_{L^2(\Omega)} \leq C \left(\|g\|_{H^{-1/2}(\Gamma )}+ \|f\|_{\mathcal{H}^\ast}\right). 
\ee
For each $(f,g) \in \cH^* \times H^{-1 \slash 2}(\Gamma)$, $\cS_q^t(f,g)$ will be referred to as the {\it transposition solution} to the BVP \eqref{a1}. As a matter of fact we deduce from \eqref{2.2} that 
\bel{a4}
(-\Delta + q ) \cS_q^t(f,g) = f\ \mbox{in the distributional sense in}\ \Omega. 
\ee
Let us establish now that the transposition solution $\cS_q^t(f,g)$ coincides with the classical $H^2(\Omega)$-solution $\cS_q(f,g)$ to \eqref{a1} in the particular case where $(f,g) \in L^2(\Omega )\times H^{3/2}(\Gamma)$.

\begin{proposition}
\label{proposition2.1}
For any $(f,g)\in L^2(\Omega )\times H^{3/2}(\Gamma )$ we have $\mathcal{S}_q^t(f,g)=\mathcal{S}_q(f,g)$.
\end{proposition}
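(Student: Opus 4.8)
The plan is to compare the variational identity \eqref{2.2} that characterizes the transposition solution $\cS_q^t(f,g)$ with the identity \eqref{2.1} satisfied by the classical $H^2(\Omega)$-solution $\cS_q(f,g)$, and then to exploit the surjectivity of $-\Delta+q$ from $\cH$ onto $L^2(\Omega)$ in order to conclude.

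First I would observe that for data $(f,g)\in L^2(\Omega)\times H^{3/2}(\Gamma)\subset \cH^*\times H^{-1/2}(\Gamma)$ and any test function $v\in\cH$, the duality pairings on the left-hand side of \eqref{2.2} collapse to ordinary integrals: one has $\langle f,v\rangle=\int_\Omega f\overline v\,\dd x$ because $f\in L^2(\Omega)$, and $\langle g,\partial_\nu v\rangle_{1/2}=\int_\Gamma g\,\partial_\nu\overline v\,\dd\sigma(x)$ because $g\in H^{3/2}(\Gamma)$ while $\partial_\nu v\in H^{1/2}(\Gamma)$. Hence the left-hand side of \eqref{2.2} equals $-\int_\Gamma g\,\partial_\nu\overline v\,\dd\sigma(x)+\int_\Omega f\overline v\,\dd x$, which is exactly the left-hand side of \eqref{2.1}.

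Next I would subtract \eqref{2.1} from \eqref{2.2}. Writing $w=\cS_q^t(f,g)-\cS_q(f,g)\in L^2(\Omega)$, this yields $\int_\Omega w\,\overline{(-\Delta+q)v}\,\dd x=0$ for every $v\in\cH$. Since $0$ belongs to the resolvent set of $A_q$, the operator $A_q=-\Delta+q:\cH\to L^2(\Omega)$ is boundedly invertible with $A_q^{-1}=\cS_{q,0}$, so in particular $\{(-\Delta+q)v:\ v\in\cH\}=L^2(\Omega)$. Consequently $\int_\Omega w\,\overline h\,\dd x=0$ for all $h\in L^2(\Omega)$, i.e. $w$ is orthogonal to all of $L^2(\Omega)$, whence $w=0$ and $\cS_q^t(f,g)=\cS_q(f,g)$.

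There is no serious obstacle here; the only point deserving a little care is the first step, namely checking that the abstract anti-linear form $\ell$ defined on $\cH^*\times H^{-1/2}(\Gamma)$ genuinely restricts, on regular data, to the concrete one in \eqref{a2b} — this is immediate once one recalls how the Sobolev duality pairings and the embeddings $L^2(\Omega)\hookrightarrow\cH^*$ and $H^{3/2}(\Gamma)\hookrightarrow H^{-1/2}(\Gamma)$ are set up. After that identification, the surjectivity of $A_q$ does all the remaining work.
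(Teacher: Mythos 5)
Your proof is correct, and it takes a genuinely different (and somewhat more economical) route than the paper's. You observe that for $(f,g)\in L^2(\Omega)\times H^{3/2}(\Gamma)$ the left-hand sides of \eqref{2.1} and \eqref{2.2} coincide, subtract, and use the surjectivity of $A_q=-\Delta+q:\cH\to L^2(\Omega)$ (valid because $0$ is in the resolvent set) to conclude that $w=\cS_q^t(f,g)-\cS_q(f,g)$ is orthogonal to all of $L^2(\Omega)$, hence zero. This is in essence the remark that \eqref{2.1} exhibits $\cS_q(f,g)$ as a Riesz representative of the same anti-linear form $h\mapsto\ell(\cS_{q,0}(h))$ whose representative defines $\cS_q^t(f,g)$, so the two must agree; no trace theory, extension theorem, or elliptic regularity is needed beyond what already went into \eqref{2.1}. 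The paper argues differently: it sets $u=\cS_q^t(f,g)$, notes $u\in H_\Delta(\Omega)$ from \eqref{a4}, tests \eqref{2.2} against $v\in\cH$ with prescribed normal trace $t_1v=\varphi$, and uses the generalized Green formula of Lemma \ref{lm2} to obtain $\langle g-t_0u,\varphi\rangle_{1/2}=0$ for every $\varphi\in H^{1/2}(\Gamma)$, whence $t_0u=g$; it then concludes by uniqueness for the BVP. That route is slightly heavier, but it buys the trace identity $t_0\,\cS_q^t(f,g)=g$ by a method that extends to general $g\in H^{-1/2}(\Gamma)$ — which is the fact the paper actually relies on later (e.g.\ in Step 3 of the proof of Proposition \ref{pr-cgo}, where the CGO solution is asserted to vanish on $\Gamma_-^\epsilon$ because the transposition solution attains its prescribed boundary value). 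Your argument proves the proposition as stated but does not by itself identify the boundary trace of $\cS_q^t$ for irregular data.
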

\begin{proof}
Put $u=\cS_q^t(f,g)$. We have $u \in H_\Delta (\Omega)$ directly from \eqref{a4} hence $t _0 u \in H^{-1/2}(\Gamma)$. Further, given $\varphi \in H^{1/2}(\Gamma)$, we may find $v \in \cH$ such that $t_1 v=\varphi$ by the usual extension theorem (see e.g. \cite[Theorem 8.3, p. 39]{LM}). Applying Lemma \ref{lm2} for such a test function $v$, we find that
\[
\int_\Omega f\overline{v}\dd x=\int_\Omega (-\Delta +q)u\overline{v}\dd x= \int_\Omega u \overline{(-\Delta +q) v}\dd x+\langle t_0u, \varphi \rangle_{1/2}.
\]
From this and \eqref{2.2} then follows that 
$\int_\Omega f \overline{v} \dd x = \int_\Omega f \overline{v} \dd x-\langle g, \varphi \rangle _{1/2}+ \langle t_0 u , t_1 v \rangle_{1/2}$,
which entails
$$
\langle g - t_0 u, \varphi \rangle_{1/2}=0.
$$
Since the above identity holds for any $\varphi \in H^{1/2}(\Gamma )$ we obtain that $t_0 u = g$ and hence $u=g$ on $\Gamma$. This yields the desired result.
\end{proof}

For $g \in H^{-1/2}(\Gamma)$ we put $\cS_{q,1}^t(g)=\cS_q^t(0,g)$. We have $\cS_{q,1}^t(g) \in H_{\Delta}(\Omega)$ by \eqref{a4}, with
$$
\|\mathcal{S}_{q,1}^t(g)\|_{H_\Delta (\Omega)}\leq C \left( 1 + \| q \|_{L^{\infty}(\Omega)} \right) \|g\|_{H^{-1/2}(\Gamma)},
$$
from \eqref{a3}. Hence $\cS_{q,1}^t \in \mathscr{B}(H^{-1/2}(\Gamma ),H_\Delta (\Omega))$ so we get
$$ \Lambda_q= t_1 \circ \cS_{q,1}^t \in  \mathscr{B}(H^{-1/2}(\Gamma),H^{-3/2}(\Gamma)), $$
with the aid of Lemma \ref{lemma2.1}.

Finally we have $\mathcal{S}_q^t(f,g)=\mathcal{S}_q^t(f,0)+\mathcal{S}_q^t(0,g)$, by linearity of $\mathcal{S}_q^t$. 
Put $\mathcal{S}_{q,0}^t(f)=\mathcal{S}_q^t(f,0)$ and $\mathcal{S}_{q,1}^t(g)=\cS_q^t(0,g)$.
Since $\mathcal{S}_{q,0}^t \in \mathscr{B}(\mathcal{H}^\ast ,H_\Delta (\Omega ))$ and $\mathcal{S}_{q,1}^t\in \mathscr{B}(H^{-1/2}(\Gamma ),H_\Delta (\Omega ))$, it follows from Lemma \ref{lemma2.1} that
\[
\Lambda_q = t_1 \circ \mathcal{S}_{q,1}^t \mathscr{B}(H^{-1/2}(\Gamma ), H^{-3/2}(\Gamma)).
\]
Further, as $\mathcal{S}_{q,1}^t(g)=\mathcal{S}_{0,1}^t(g)+\mathcal{S}_{q,0}(f)$, with $f=-q\mathcal{S}_{0,1}^t(g)$, we get that
\[
\Lambda _q =\Lambda _0+R_q,
\]
where the operator$R_q$
\[
R_q : g \mapsto t_1 \mathcal{S}_{q,0} \left( -q\mathcal{S}_{0,1}^t(g) \right)\in H^{1/2}(\Gamma ),
\]
is bounded from $H^{-1/2}(\Gamma)$ into $H^{1/2}(\Gamma)$.
\begin{remark}
\label{remark2.1}
Let us denote by $\Sigma _q$ the (continuous) DN map
\[
\Sigma _q: g\in H^{3/2}(\Gamma )\mapsto t_1 \mathcal{S}_{q,1}(g) \in H^{1/2}(\Gamma ).
\]
Then, since $q$ is real-valued, we get from the Green formula that
\[
\int_\Gamma g\overline{\Sigma _q(h)} \dd \sigma(x)=\int_\Gamma \Sigma_q(g)\overline{h} \dd \sigma(x),\ g,h\in H^{3/2}(\Gamma),
\]
which entails that $\left( \Sigma _q^* \right)_{|H^{3/2}(\Gamma )}=\Sigma _q$. On the other hand, we deduce from Lemma \ref{lm2} that $\Sigma_q^*=\Lambda_q$.
\end{remark}


\section{CGO solutions vanishing on some part of the boundary}
\label{section3}

In this section we build CGO solutions to the Laplace equation appearing in \eqref{bvp}, that vanish on a prescribed part of the boundary $\Gamma$.
This is by means of a suitable Carleman estimate borrowed from \cite{BU}. The corresponding result is as follows.

\begin{proposition}
\label{pr-cgo}
For $\delta>0$ fixed, let $q\in \delta B_{L^\infty (\Omega)}$. Let $\zeta, \eta \in \mathbb{S}^{n-1}$ satisfy $\zeta \cdot \eta =0$ and
fix $\epsilon>0$ so small that 
$\Gamma_-^\epsilon =\Gamma_-^\epsilon (\zeta ) = \{x \in \Gamma;\ \zeta \cdot \nu (x)<-\epsilon \} \neq \emptyset$.
Then there exists $\tau_0=\tau_0(\delta)>0$, such that for any $\tau \geq \tau_0$ we may find
$\psi \in L^2(\Omega)$ obeying 
$\|\psi \|_{L^2(\Omega )}\leq C \tau ^{-1/2}$ for some constant $C>0$ depending only on $\delta$, $\Omega$ and $\epsilon$, and such that
the function $u = e^{\tau (\zeta +i\eta ) \cdot x}(1+\psi) \in H_\Delta(\Omega)$ is solution to the BVP
$$ 
\left\{ \begin{array}{ll} 
(-\Delta +q)u=0 & \mbox{\rm in}\ \Omega, \\
u=0 & \mbox{\rm on}\ \Gamma_-^\epsilon.
\end{array} \right.
$$
\end{proposition}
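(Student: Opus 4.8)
The plan is to construct the CGO solution by the standard Faddeev/Sylvester--Uhlmann scheme, but to build in the vanishing condition on $\Gamma_-^\epsilon$ by working in a functional-analytic dual formulation governed by the Bukhgeim--Uhlmann Carleman estimate rather than by the usual Fourier-multiplier construction. Writing $u = e^{\tau(\zeta+i\eta)\cdot x}(1+\psi)$ and plugging into $(-\Delta+q)u=0$, one finds that $\psi$ must solve an equation of the form
\begin{equation*}
e^{-\tau(\zeta+i\eta)\cdot x}(-\Delta+q)\left(e^{\tau(\zeta+i\eta)\cdot x}\psi\right) = -q\quad\mbox{in}\ \Omega,
\end{equation*}
together with the boundary condition $\psi = 0$ on $\Gamma_-^\epsilon$ (so that $u=0$ there). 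Conjugating the Laplacian by the exponential weight produces the operator $P_\tau = -\Delta - 2\tau(\zeta+i\eta)\cdot\nabla + q$ (the $\tau^2|\zeta+i\eta|^2$ term cancels because $|\zeta|^2=|\eta|^2$ and $\zeta\cdot\eta=0$, i.e. $(\zeta+i\eta)\cdot(\zeta+i\eta)=0$). So the task reduces to: solve $P_\tau\psi = -q$ in $\Omega$ with $\psi|_{\Gamma_-^\epsilon}=0$ and the bound $\|\psi\|_{L^2(\Omega)}\le C\tau^{-1/2}$.

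First I would set up the solvability of this boundary value problem by duality. The Carleman estimate from \cite{BU} — which is precisely the tool the introduction advertises — controls a weighted $L^2$ norm of a function $w$ with $w=0$ (or $\partial_\nu w = 0$) on the ``good'' part of the boundary in terms of the weighted $L^2$ norm of $P_\tau^* w$ plus a boundary term on the complementary part, with the decisive gain of a factor $\tau^{-1}$ (hence, after taking square roots in the relevant places, the $\tau^{-1/2}$ in the statement). Concretely, I would introduce the space of admissible test functions $w$ (smooth up to $\overline\Omega$, vanishing to the appropriate order on $\Gamma\setminus\Gamma_-^\epsilon$, say), define on it the anti-linear functional $w\mapsto -\int_\Omega q\,\overline{w}\,\dd x$, and use the Carleman inequality to show this functional is bounded with respect to the norm $\|P_\tau^* w\|$ (in the suitable $\tau$-weighted $L^2$ space). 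By Hahn--Banach and the Riesz representation theorem there is then $\psi$ in the dual weighted space with $P_\tau\psi = -q$ in the distributional sense and the norm estimate inherited from the Carleman constant, which after unwinding the weights gives exactly $\|\psi\|_{L^2(\Omega)}\le C\tau^{-1/2}$ for $\tau\ge\tau_0(\delta)$; the threshold $\tau_0$ is where the $O(\tau^{-1})$ lower-order contributions (the zeroth-order term $q$, bounded by $\delta$) are absorbed by the leading part of the Carleman estimate. The condition $\Gamma_-^\epsilon(\zeta)\ne\emptyset$ is what makes the ``good''/``bad'' boundary split nontrivial and the estimate applicable.

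Next I would check that the $\psi$ so produced actually realizes the boundary condition $u=0$ on $\Gamma_-^\epsilon$ in the trace sense of Lemma~\ref{lemma2.1}: since $q\in L^2(\Omega)$ and $P_\tau\psi=-q$, one gets $\Delta\psi\in L^2(\Omega)$, so $\psi\in H_\Delta(\Omega)$ and $u\in H_\Delta(\Omega)$, and $t_0 u$ makes sense in $H^{-1/2}(\Gamma)$; the vanishing of the trace on $\Gamma_-^\epsilon$ comes from testing the integral identity \eqref{2.2}-type relation against test functions $v$ supported near $\Gamma_-^\epsilon$ and invoking the generalized Green formula (Lemma~\ref{lm2}), exactly as in the proof of Proposition~\ref{proposition2.1}. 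Finally, $u = e^{\tau(\zeta+i\eta)\cdot x}(1+\psi)$ solves $(-\Delta+q)u=0$ in $\Omega$ by construction, completing the proof.

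\medskip

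\noindent\textbf{Main obstacle.} The crux is the duality/Carleman step: one must have the Bukhgeim--Uhlmann Carleman estimate in exactly the right form — a two-sided boundary-weighted $L^2$ inequality with a clean $\tau^{-1}$ gain and with the ``loss'' confined to the complement of $\Gamma_-^\epsilon$ — and one must correctly match the weighted spaces on the two sides of the duality so that the resulting solution operator lands in $L^2(\Omega)$ with the stated $\tau^{-1/2}$ decay and so that the boundary condition is the homogeneous one on $\Gamma_-^\epsilon$ rather than on its complement. Getting the geometry of the boundary split consistent with the sign of $\zeta\cdot\nu$ (and with the $\epsilon$-cushion) throughout both the Carleman estimate and the trace argument is where the care is needed; the rest is routine once that bookkeeping is fixed.
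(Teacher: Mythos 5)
Your overall strategy --- conjugate by the exponential weight, then solve for the remainder $\psi$ by a duality/Hahn--Banach argument powered by the Bukhgeim--Uhlmann Carleman estimate --- is exactly the paper's. But there is a concrete error that breaks the construction as written: you impose $\psi=0$ on $\Gamma_-^\epsilon$ ``so that $u=0$ there''. Since $u=e^{\tau(\zeta+i\eta)\cdot x}(1+\psi)$, the condition $\psi=0$ gives $u=e^{\tau(\zeta+i\eta)\cdot x}\neq 0$ on $\Gamma_-^\epsilon$; what you need is $\psi=-1$ there. This is not a cosmetic slip, because an \emph{inhomogeneous} Dirichlet condition cannot be produced by the dual functional you propose, $w\mapsto -\int_\Omega q\,\overline{w}\,\dd x$, which contains no boundary term: by transposition such a functional yields a solution of $P_\tau\psi=-q$ with homogeneous data on the prescribed part of the boundary. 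The paper instead feeds the transposition problem the data $f=qe^{\rho\cdot x}$ in $\Omega$ \emph{and} the boundary datum $g=-\varphi e^{\rho\cdot x}$ on $\Gamma_-$, where $\varphi$ is a cutoff equal to $1$ on $\overline{\Gamma_-^\epsilon}$ and supported in $\Gamma_-^{\epsilon/2}$; the anti-linear form is $\ell(Pv)=(f,v)+\langle g,\partial_\nu v\rangle_-$, and it is precisely this boundary pairing that forces the transposition solution $v_1$ to equal $g$ on $\Gamma_-$, hence $\psi=e^{-\rho\cdot x}v_1=-\varphi=-1$ on $\Gamma_-^\epsilon$ and $u=0$ there.

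Relatedly, your accounting of the $\tau^{-1/2}$ rate is off. With only the interior source $-q$ the duality argument would give the better bound $\|\psi\|_{L^2(\Omega)}\leq C\tau^{-1}$; it is the boundary term that limits the decay, because the Carleman inequality controls the interior pairing with weight $\tau^{-1}\|f\|_{-\tau}$ but the boundary pairing only with weight $\tau^{-1/2}\|g\|_{-\tau,\mu^{-1},-}$ (and the support condition on $\varphi$, namely $\mu=|\zeta\cdot\nu|\geq\epsilon/2$ there, is what keeps $\|g\|_{-\tau,\mu^{-1},-}$ bounded in terms of $\epsilon$). Once you (i) replace the target boundary value of $\psi$ on $\Gamma_-^\epsilon$ by $-1$, (ii) add the corresponding boundary pairing to your functional, and (iii) track the two different powers of $\tau$ attached to the interior and boundary contributions, your argument coincides with the paper's proof.
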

\begin{proof}
The proof is made of three steps.\\
\noindent {\it Step 1: A Carleman estimate.}
For notational simplicity we write $\Gamma_{\pm}$ instead of $\Gamma_{\pm}(\zeta)$, which isdefined in \eqref{a0}, and recall from \cite{BU} that
we may find two constants $\tau_0=\tau_0(\delta)>0$ and $C=C(\delta)>0$ such the estimate
\beas
& & C\tau ^2\int_\Omega e^{-2\tau x\cdot \zeta}|v|^2\dd x+\tau \int_{\Gamma _+}|\zeta \cdot \nu(x)|e^{-2\tau x\cdot \zeta}|\partial _\nu v|^2\dd \sigma (x)  \\
&\leq & \int_\Omega e^{-2\tau x\cdot \zeta}|(\Delta -q)v|^2\dd x +\tau \int_{\Gamma _-}|\zeta \cdot \nu(x)|e^{-2\tau x\cdot \zeta}|\partial _\nu v|^2\dd \sigma (x),
\eeas
holds for all $\tau \geq \tau_0$ and $v \in \cH$. Since $\Gamma_\pm (-\zeta )=\Gamma_\mp (\zeta)$, the above inequality may be equivalently rewritten as
\bea
& & C\tau ^2\int_\Omega e^{2\tau x\cdot \zeta}|v|^2\dd x+\tau \int_{\Gamma _-}|\zeta \cdot \nu(x)|e^{2\tau x\cdot \zeta}|\partial _\nu v|^2\dd \sigma (x) \label{3.2}
\\
& \leq & \int_\Omega e^{2\tau x\cdot \zeta}|(\Delta -q)v|^2\dd x +\tau \int_{\Gamma _+}|\zeta \cdot \nu(x)|e^{2\tau x\cdot \zeta}|\partial _\nu v|^2\dd \sigma (x),\ v \in \cH,\ \tau \geq\tau_0. \nonumber
\eea
In view of more compact reformulation of \eqref{3.2} we introduce for each real number $\tau$ the two following scalar products:
$$(u,v)_\tau =\int_\Omega e^{2\tau x\cdot \zeta}u\overline{v}\dd x\ \mbox{in}\ L^2(\Omega), $$
and 
$$ \langle \phi ,\psi \rangle_{\tau ,\mu ,\pm} =\int_{\Gamma_\pm}\mu (x)e^{2\tau x\cdot \zeta}\phi\overline{\psi}\dd \sigma (x),\ \mbox{where}\ \mu (x)=|\zeta \cdot \nu(x)|,\ \mbox{in}\ L^2(\Gamma_\pm).$$
We denote by $L_\tau ^2(\Omega)$ (resp., $L_{\tau ,\mu} ^2(\Gamma_\pm)$) the space $L^2(\Omega)$ (resp., $L^2(\Gamma_\pm)$) endowed with the norm $\|\cdot \|_\tau$ (resp. $\|\cdot \|_{\tau ,\mu ,\pm}$) generated by the scalar product $(\cdot,\cdot)_\tau$ (resp., $\langle \cdot ,\cdot \rangle_{\tau ,\mu ,\pm}$).
With these notations, the estimate \eqref{3.2} simply reads
\begin{equation}
\label{3.3}
C\tau ^2\|v\|_\tau ^2+\tau \|\partial _\nu v\|_{\tau ,\mu ,-}^2\leq \|(\Delta -q)v\|^2_\tau +\tau \|\partial _\nu v\|_{\tau ,\mu ,+}^2,\ v \in \cH,\ \tau \geq \tau_0.
\end{equation}

\noindent {\it Step 2: Building suitable transposition solutions.}
Let us identify $L_0^2(\Omega )=L^2(\Omega )$ (resp., $L_{0,1}^2(\Gamma _\pm)=L^2(\Gamma _\pm)$) with its dual space, so the space dual to $L_\tau ^2(\Omega )$ (resp., $L_{\tau ,\mu} ^2(\Gamma _\pm)$)can be identified with $L_{-\tau} ^2(\Omega)$ (resp. $L_{-\tau ,\mu ^{-1}}^2(\Gamma_\pm)$).
Next we consider the operator 
\bel{a5b}
P: v \in \cH \mapsto ((\Delta -q)v,\partial_\nu v_{|\Gamma _+})\in L_\tau ^2(\Omega) \times L_{\tau ,\mu} ^2(\Gamma_+),
\ee
which is injective by \eqref{3.2}. Therefore, for each $(f,g)\in L^2_{-\tau} (\Omega )\times L^2_{-\tau , \mu ^{-1}}(\Gamma _-)$, the following anti-linear form 
\bel{a6}
\ell (w_1,w_2)=(f,v)+\langle g , \partial _\nu v \rangle _- ,\ (w_1,w_2)=P v,\ v \in \cH,
\ee
where $(\cdot ,\cdot )$ (resp., $\langle \cdot ,\cdot \rangle_\pm$) denotes the usual scalar product in $L^2(\Omega )$ (resp., $L^2(\Gamma_\pm )$), is well defined on $\mbox{Ran}(P)$.
Moreover it holds true for every $(w_1,w_2)=P v$, where $v$ is arbitrary in $\cH$, that 
\bea
|\ell (w_1,w_2)| & \leq &  \|f\|_{-\tau}\|v\|_{\tau}+\|g\|_{-\tau ,\mu^{-1},-} \|\partial_\nu v \|_{\tau ,\mu, -} \label{a5} \\
& \leq & \left(  \tau^{-1} \|f\|_{-\tau} + \tau^{-1 \slash 2} \|g\|_{-\tau ,\mu^{-1},-} \right) \left(  \tau^{2} \| v \|_{\tau}^2+ \tau \|\partial_\nu v \|_{\tau ,\mu, -}^2 \right)^{1 \slash 2}. \nonumber
\eea
Thus, upon equipping the space $L_\tau ^2(\Omega )\times L_{\tau ,\mu} ^2(\Gamma _+)$ with the norm
$| \|(w_1,w_2) \| |_\tau =\left( \|w_1\|^2_\tau + \tau\|w_2\|_{\tau ,\mu ,+}^2\right)^{1/2}$, we derive from \eqref{3.3} and \eqref{a5} that
\beas
|\ell (w_1,w_2)|&\leq &  C\left(\tau^{-1}\|f\|_{-\tau}+\tau^{-1/2}\|g\|_{-\tau ,\mu^{-1},-}\right)\left( \|(\Delta -q)v\|^2_\tau +\tau \| \partial_\nu v\|_{\tau ,\mu ,+}^2\right)^{1/2}
\\
&\leq&  C\left(\tau^{-1}\|f\|_{-\tau}+\tau^{-1/2}\|g\|_{-\tau ,\mu^{-1},-}\right) |\|Pv\||_\tau,
\eeas
for some constant $C>0$ depending only on $\delta$. As a consequence we have
\bel{a7}
|\ell (w_1,w_2)| \leq C\left(\tau^{-1}\|f\|_{-\tau}+\tau^{-1/2}\|g\|_{-\tau ,\mu^{-1},-}\right) |\|(w_1,w_2)\||_\tau,\ (w_1,w_2) \in \mbox{Ran}(P).
\ee
Let us identify the dual space of $L_\tau ^2(\Omega )\times L_{\tau ,\mu} ^2(\Gamma _+)$ with $L_{-\tau} ^2(\Omega )\times L_{-\tau ,\mu^{-1}} ^2(\Gamma _+)$ endowed with the norm
$|\|(w_1,w_2)\||_{-\tau} =\left( \|w_1\|^2_{-\tau} + \tau^{-1}\|w_2\|_{-\tau ,\mu{-1} ,+}^2\right)^{1/2}$. Thus, with reference to \eqref{a5b}-\eqref{a6}, we deduce from \eqref{a7} upon applying Hahn-Banach extension theorem that there exists $(v_1,v_2) \in L_{-\tau} ^2(\Omega )\times L_{-\tau ,\mu^{-1}} ^2(\Gamma _+)$ obeying
\begin{equation}
\label{3.4}
| \| (v_1 , v_2 ) \| |_{-\tau} \leq C\left(  \tau^{-1}\|f\|_{-\tau}+\tau^{-1/2}\|g\|_{-\tau ,\mu^{-1},-}\right),
\end{equation}
where $C>0$ is the same as in the right hand side of \eqref{a7}, such that we have
$$
( v_1 , (\Delta -q) v )+ \langle v_2 , \partial _\nu v \rangle_+=(f , v)+\langle g , \partial _\nu v \rangle _-,\  v \in \cH.
$$
Bearing in mind that $f \in \cH^*$ and $g\chi_{\Gamma _-} -v_2\chi_{\Gamma _+} \in H^{-1 \slash 2}(\Gamma)$, where $\chi_{\Gamma_{\pm}}$ is the characteristic function of $\Gamma_{\pm}$
in $\Gamma$, the above identity reads
$$
\langle (-f) , v \rangle - \langle g \chi_{\Gamma _-} -v_2\chi_{\Gamma _+} , \partial _\nu v \rangle_{1 \slash 2} = \int_{\Omega} v_1 \overline{(\Delta -q) v} \dd x,\ v \in \cH.
$$
Therefore, by \eqref{2.2}, $v_1$ is the transposition solution to the following IBVP:
\begin{equation}\label{3.6}
\left\{
\begin{array}{ll}
(-\Delta  +q)v_1=-f &\mbox{in}\ \Omega,
\\
v_1=g\chi_{\Gamma _-} - v_2\chi_{\Gamma _+}& \mbox{on}\ \Gamma.
\end{array}
\right.
\end{equation}

\noindent {\it Step 3: End of the proof.}
Set $\rho =\tau (\zeta +i\eta )$. The last step of the proof involves picking $\varphi \in \mathscr{D}(\mathbb{R}^n;[0,1])$ such that $\varphi(x) =1$ for $x \in \overline{\Gamma_-^\epsilon}$ and $\mbox{supp}(\varphi)\cap \Gamma \subset \Gamma_-^{\epsilon \slash 2}$, and considering the transposition solution $v_1$ to the BVP \eqref{3.6} associated with $f=q e^{\rho \cdot x}$ and $g=-\varphi e^{\rho \cdot x}$. Thus, putting $\psi = e^{-\rho\cdot x} v_1$, we derive from \eqref{3.4} that
\beas
\|\psi \|_0 =\|v_1\|_{-\tau} & \leq & C \left(\tau ^{-1}\|qe^{\rho \cdot x}\|_{-\tau}+\tau^{-1/2}\|\varphi e^{\rho \cdot x}\|_{-\tau ,\mu^{-1},-}\right) 
\\
& \leq &  C \tau^{-1/2} \left( \delta \tau_0(\delta)^{-1 \slash 2} +  \frac{2 | \Gamma_-|}{\epsilon} \right),
\eeas
where $| \Gamma_- |$ denotes the $(n-1)$-dimensional Lebesgue-measure of $\Gamma_-$. Further, bearing in mind that $\rho \cdot \rho =0$, it easy to check that $u=e^{\rho \cdot x}(1+\psi) \in H_\Delta (\Omega)$ satisfies $(-\Delta + q) u =0$ in $\Omega$. Finally, since $v_1=g$ on $\Gamma_-$ by \eqref{3.6}, we obtain that
$\psi = -\varphi = -1$ and consequently that $u=0$ on $\Gamma_-^{\epsilon}$. This proves the desired result.
\end{proof}


\section{Proof of Theorem \ref{thm-main}}
\label{section4}

This section contains the proof of Theorem \ref{thm-main} which consists of a succession of five lemmas.

\subsection{A suitable set of Fourier variables} 
For $\epsilon>0$ we choose $\eta =\eta(\epsilon) \in (0,\pi \slash 2)$ so small that
\bel{s0}
\left( (1-\sin \theta)^2+ 4\cos^2 \theta \right)^{1 \slash 2}<\epsilon,\ \theta \in ( \pi / 2 - \eta, \pi / 2 + \eta ),
\ee
and we define $\cB_\epsilon$ as the set of vectors $\beta=(\beta_1,\ldots,\beta_n) \in \R^n$ with the following spherical coordinates
\begin{equation}
\label{s0b}
\left\{
\begin{array}{lcl}
\beta_1 & = & s \cos \theta_1, \\
\beta_j & = & s \cos \theta_j \left( \prod_{k=1}^{j-1}\sin \theta_k \right),\ j=2,\ldots,n-2, \\
\beta_{n-1} & = & s \sin \phi \left( \prod_{k=1}^{n-2} \sin \theta_k  \right), \\
\beta_n & = & s \cos \phi \left( \prod_{k=1}^{n-2} \sin \theta_k  \right),
\end{array}
\right.
\end{equation}
where $s \in (0,1)$, $\theta_1 \in (\pi / 2 -\eta , \pi /2+\eta)$, $\theta_2 (0, \pi /3)$, $\theta_j \in (0,\pi)$ for $j=3,\ldots,n-2$, and $\phi \in (0, 2\pi)$.
Notice that $\cB_\epsilon$ has positive Lebesgue measure in $\R^n$ given by:
\bel{s1}
|\cB_\epsilon|= \int_0^1 \int_{\pi/2 -\eta}^{\pi /2+\eta}\int_0^{\pi/3}\int_0^\pi \ldots \int_0^\pi \int_0^{2\pi} s^{n-1} \left( \prod_{k=1}^{n-2}\sin ^k \theta _{n-1-k} \right) \dd s \dd \theta_1 \ldots \dd \theta_{n-2} \dd \phi >0.
\ee
For further reference, we now establish the following result.
\begin{lemma}
\label{lm-zeta}
Let $\cT$ be any orthogonal transformation in $\R^n$ that maps $\xi$ onto $e_1=(1,0,\ldots,0)$.
Then for all $\epsilon>0$ and $\kappa \in \cT^* \cB_\epsilon$, there exists $\zeta \in \mathbb{S}^{n-1}$ satisfying
$\kappa \cdot \zeta =0$ and $| \zeta - \xi | < \epsilon$.
\end{lemma}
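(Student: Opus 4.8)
\emph{Plan of proof.} The orthogonality of $\cT$ lets me reduce at once to the case $\xi=e_1$. Writing $\kappa=\cT^*\beta$ with $\beta=(\beta_1,\dots,\beta_n)\in\cB_\epsilon$, I note that for any $\zeta\in\mathbb{S}^{n-1}$ one has $\kappa\cdot\zeta=\langle\cT^*\beta,\zeta\rangle=\langle\beta,\cT\zeta\rangle$ and, $\cT$ being an isometry, $|\zeta-\xi|=|\cT\zeta-\cT\xi|=|\cT\zeta-e_1|$. Hence it suffices to exhibit a unit vector $\zeta'\in\mathbb{S}^{n-1}$ with $\beta\cdot\zeta'=0$ and $|\zeta'-e_1|<\epsilon$; then $\zeta=\cT^*\zeta'$ satisfies the claim, since $\zeta\in\mathbb{S}^{n-1}$, $\kappa\cdot\zeta=\langle\cT^*\beta,\cT^*\zeta'\rangle=\langle\beta,\zeta'\rangle=0$, and $|\zeta-\xi|=|\zeta'-e_1|<\epsilon$.

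For the construction, I would read off from \eqref{s0b} that $|\beta|=s\in(0,1)$ (so $\beta\neq0$) and $\beta_1=s\cos\theta_1$ with $\theta_1\in(\pi/2-\eta,\pi/2+\eta)\subset(0,\pi)$, whence $\langle e_1,\beta/|\beta|\rangle=\cos\theta_1$ and $\sin\theta_1>0$. The natural candidate is the normalized orthogonal projection of $e_1$ onto the hyperplane $\beta^\perp$, namely
\[
\zeta'=\frac{1}{\sin\theta_1}\Bigl(e_1-\cos\theta_1\,\frac{\beta}{|\beta|}\Bigr).
\]
A one-line computation gives $\zeta'\cdot\beta=\sin^{-1}\theta_1(\beta_1-\cos\theta_1|\beta|)=0$ and $|\zeta'|^2=\sin^{-2}\theta_1\bigl(1-2\cos^2\theta_1+\cos^2\theta_1\bigr)=\sin^{-2}\theta_1(1-\cos^2\theta_1)=1$, so $\zeta'\in\mathbb{S}^{n-1}$ and it is well defined because $\sin\theta_1>0$.

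It remains to estimate $|\zeta'-e_1|$. Expanding $\zeta'-e_1=\dfrac{1-\sin\theta_1}{\sin\theta_1}\,e_1-\dfrac{\cos\theta_1}{\sin\theta_1}\,\dfrac{\beta}{|\beta|}$, using again $\langle e_1,\beta/|\beta|\rangle=\cos\theta_1$ together with $\cos^2\theta_1=(1-\sin\theta_1)(1+\sin\theta_1)$, the cross term collapses and one obtains the clean identity
\[
|\zeta'-e_1|^2=\frac{(1-\sin\theta_1)^2-2(1-\sin\theta_1)\cos^2\theta_1+\cos^2\theta_1}{\sin^2\theta_1}=2(1-\sin\theta_1).
\]
Finally $4\cos^2\theta_1=4(1-\sin\theta_1)(1+\sin\theta_1)\ge 2(1-\sin\theta_1)$ since $\sin\theta_1>-1/2$, so $|\zeta'-e_1|^2\le(1-\sin\theta_1)^2+4\cos^2\theta_1<\epsilon^2$ by the defining property \eqref{s0} of $\eta$, which closes the argument. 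I do not expect a genuine obstacle here: this is an explicit construction followed by a short computation, and the only point requiring care is to match the algebra with the somewhat ad hoc bound \eqref{s0}, which is tailored precisely so that $|\zeta'-e_1|<\epsilon$ comes out. Note also that of the constraints defining $\cB_\epsilon$ only the range of $\theta_1$ and the fact $s\neq0$ are used in this lemma; the remaining angular restrictions serve elsewhere.
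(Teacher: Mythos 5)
Your proof is correct. It takes a mildly different route from the paper's: both are explicit constructions of a unit vector orthogonal to $\beta$ and close to $e_1$, but the vectors differ. The paper takes $\tilde{\zeta}=(\tilde{\zeta}_1,\tilde{\zeta}_2,0,\dots,0)$ in the plane spanned by $e_1$ and $e_2$, with components built from $\theta_1$ and $\theta_2$ so that $\beta_1\tilde{\zeta}_1+\beta_2\tilde{\zeta}_2=0$, and its estimate of $|\tilde{\zeta}-e_1|^2$ produces the term $\cos^2\theta_1\slash\cos^2\theta_2$, which is controlled by $4\cos^2\theta_1$ only thanks to the restriction $\theta_2\in(0,\pi\slash 3)$ in the definition of $\cB_\epsilon$; this is exactly where the $4\cos^2\theta$ term of \eqref{s0} comes from. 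You instead take the normalized orthogonal projection of $e_1$ onto $\beta^\perp$, which gives the clean exact identity $|\zeta'-e_1|^2=2(1-\sin\theta_1)$ (equivalently $\zeta'\cdot e_1=\sin\theta_1$) and is in fact the nearest point of $\mathbb{S}^{n-1}\cap\beta^\perp$ to $e_1$, so your bound is at least as sharp as the paper's. Your closing remark is accurate: in your argument only the range of $\theta_1$ and $s\neq 0$ matter, the $(1-\sin\theta)^2$ part of \eqref{s0} is pure slack since $2(1-\sin\theta_1)\leq 4(1-\sin\theta_1)(1+\sin\theta_1)=4\cos^2\theta_1$ already suffices, and the constraint on $\theta_2$ exists only to serve the paper's particular choice of $\tilde{\zeta}$. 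Both arguments are elementary and of comparable length; yours is marginally cleaner and coordinate-free in the directions $2,\dots,n$.
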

\begin{proof}
Let $\beta=(\beta_1,\ldots,\beta_n) \in \cB_\epsilon$ be given by \eqref{s0}-\eqref{s0b}. Introduce $\tilde{\zeta} =(\tilde{\zeta}_1,\tilde{\zeta}_2,0,\ldots,0)$, where
$$
\tilde{\zeta}_1=\frac{\sin \theta _1}{\left(\sin^2\theta _1+\frac{\cos^2\theta _1}{\cos^2\theta _2} \right)^{1 / 2}},\ \tilde{\zeta}_2=-\frac{\frac{\cos \theta _1}{\cos\theta _2}}{\left( \sin^2\theta _1+\frac{\cos^2\theta _1}{\cos^2\theta_2} \right)^{1 / 2}}.
$$
Evidently we have $\tilde{\zeta} \in \mathbb{S}^{n-1}$ and $\beta \cdot \tilde{\zeta}=0$. Moreover it follows from \eqref{s0} that
$$
| \tilde{\zeta} -e_1|^2=\frac{(\sin \theta _1-1)^2+\frac{\cos^2 \theta _1}{\cos^2\theta _2}}{\sin^2\theta _1+\frac{\cos^2\theta _1}{\cos^2\theta _2}}\leq (\sin \theta _1-1)^2+4\cos^2 \theta_1<\epsilon^2.
$$
Finally, bearing in mind that $\kappa = \cT^* \beta$ and $e_1=\cT \xi$, we obtain the desired result upon taking $\zeta = \cT^* \tilde{\zeta}$.
\end{proof}

\subsection{Alessandrini's identity and consequence}
For $\epsilon>0$ put
$F^\epsilon(\zeta)=\Gamma \setminus \Gamma_-^\epsilon(\xi)$ and
$G^\epsilon(\xi)=\Gamma \setminus  \Gamma_-^\epsilon(-\xi)$, where $\Gamma_-^\epsilon(\pm\xi)$ is the same as in Proposition \ref{pr-cgo}. Since $F^\epsilon(\xi)=\{ x \in \Gamma;\ \xi \cdot \nu (x) \geq -\epsilon \}$ (resp., $G^\epsilon(\xi)=\{ x\in \Gamma;\ \xi \cdot \nu (x)\leq \epsilon \}$), it holds true that $\cap_{\epsilon > 0} F^\epsilon(\xi) = \{ x \in \Gamma;\ \xi \cdot \nu (x) \geq 0 \} = \overline{\Gamma_+(\xi)}$ (resp., $\cap_{\epsilon > 0} G^\epsilon(\xi) = \{ x \in \Gamma;\ \xi \cdot \nu (x) \leq 0 \} = \overline{\Gamma_-(\xi)}$). Thus, from the very definitions of $F$ and $G$, we may choose
$\epsilon_0=\epsilon_0(\xi,F,G)>0$ so small that 
\bel{pm1}
F^{2 \epsilon}(\xi) \subset F\ \mbox{and}\ G^{2 \epsilon}(\xi) \subset G,\ \epsilon \in (0,\epsilon_0].
\ee
Having said that we turn now to proving the following statement.
\begin{lemma}
\label{lm-ale}
Let $\cT$ be the same as in Lemma \ref{lm-zeta}, let $\tau \in [\tau_0,+\infty)$, where $\tau=\tau_0(\delta)$ is defined in Proposition \ref{pr-cgo}, and let $\epsilon \in [0,\epsilon_0)$.
Then, there exists a constant $C>0$, depending only on $\Omega$, $\delta$ and $\epsilon$, such that the estimate
$$
\left| \int_\Omega (q_2-q_1) e^{-i \kappa \cdot x} \dd x \right| \leq C \left( e^{2 d \tau} \| \widetilde{\Lambda}_{q_1,q_2} \| + \tau ^{-1/2} \right),
$$
holds uniformly in $\kappa \in r \cT^* \cB_\epsilon$ and $r \in (0,2 \tau)$. Here $\| \widetilde{\Lambda}_{q_1,q_2} \|$ is the $\mathscr{B}(H^{-1/2}(\Gamma )\cap \mathscr{E}'(F),H^{1/2}(G))$-norm of the operator $\widetilde{\Lambda}_{q_1,q_2}$ defined in \eqref{ol1}--\eqref{ol3}, and $d=d(\Omega)=\max_{x \in \overline{\Omega}}|x|<\infty$.
\end{lemma}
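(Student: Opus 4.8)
The plan is to combine two CGO solutions coming from Proposition \ref{pr-cgo} via an Alessandrini-type identity, and then control the resulting boundary term by the partial DN map $\widetilde{\Lambda}_{q_1,q_2}$ together with the Carleman-type bound on the correction terms $\psi$. First I would fix $\kappa \in r\cT^*\cB_\epsilon$ with $r \in (0,2\tau)$. By Lemma \ref{lm-zeta} applied with the parameter $\epsilon$ (after rescaling, since $\kappa/r \in \cT^*\cB_\epsilon$), there is a unit vector $\zeta$ with $\kappa \cdot \zeta = 0$ and $|\zeta - \xi| < \epsilon$; choosing a second unit vector $\eta$ orthogonal to both $\kappa$ and $\zeta$ (possible because $n \geq 3$), one also arranges a decomposition of the phase so that the product of the two exponentials reproduces $e^{-i\kappa\cdot x}$. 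Concretely, I would set $\rho_1 = \tfrac{r}{2}(\,i\kappa/|\kappa| \cdot \text{something}\,)$ — more precisely, pick $\rho_1 = \tau\zeta + i(\tfrac{r}{2}\hat\kappa + \text{(imaginary part making }\rho_1\cdot\rho_1=0))$ and $\rho_2 = -\tau\zeta + i(-\tfrac{r}{2}\hat\kappa + \cdots)$, so that $\rho_1 + \rho_2 = -i\kappa$ and each $\rho_j\cdot\rho_j = 0$; the condition $r < 2\tau$ is exactly what lets the imaginary parts be completed to null vectors with real part of size $\tau$.

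Next I would invoke Proposition \ref{pr-cgo} twice. For $q_2$ I obtain a solution $u_2 = e^{\rho_2\cdot x}(1+\psi_2) \in H_\Delta(\Omega)$ of $(-\Delta+q_2)u_2 = 0$ in $\Omega$ vanishing on $\Gamma_-^\epsilon(\zeta)$, with $\|\psi_2\|_{L^2(\Omega)} \leq C\tau^{-1/2}$; since $|\zeta-\xi|<\epsilon$ one checks $\Gamma_-^{2\epsilon}(\xi) \subset \Gamma_-^\epsilon(\zeta)$, hence $u_2 = 0$ on $\Gamma \setminus F^{2\epsilon}(\xi) \supset \Gamma \setminus F$, i.e. $t_0 u_2$ is supported in $\overline F$. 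Applying the proposition with $-\zeta$ in place of $\zeta$ (and $q_1$, $\rho_1$) gives $u_1 = e^{\rho_1\cdot x}(1+\psi_1)$ solving $(-\Delta+q_1)u_1 = 0$, vanishing on $\Gamma_-^\epsilon(-\zeta) \supset \Gamma_-^{2\epsilon}(-\xi)$, so $t_0 u_1$ is supported in $\overline G$. Now let $w$ be the solution of $(-\Delta + q_1)w = 0$ in $\Omega$ with $w = t_0 u_2$ on $\Gamma$ (transposition solution of Section \ref{section2}); then $u_2 - w$ solves $(-\Delta+q_1)(u_2-w) = (q_1-q_2)u_2$ with zero boundary data, and pairing against $u_1$ via the generalized Green formula (Lemma \ref{lm2}) yields the identity
\[
\int_\Omega (q_2 - q_1) u_2 \overline{u_1}\,\dd x = \langle \Lambda_{q_1,q_2}(t_0 u_2), t_0 u_1 \rangle,
\]
where the right side only involves $t_0 u_2 \in \mathscr{E}'(F)$ and the restriction to $G$ of $\Lambda_{q_1,q_2}(t_0 u_2)$, i.e. $\widetilde{\Lambda}_{q_1,q_2}(t_0 u_2)$ tested against $t_0 u_1|_G$. (One must be careful that $u_1,u_2 \in H_\Delta(\Omega)$ only, so the pairings are the $H^{\pm 1/2}$, $H^{\pm 3/2}$ dualities of Lemma \ref{lemma2.1}; the crucial point from Section \ref{section2} is that $\Lambda_{q_1,q_2}$ maps into $H^{1/2}(\Gamma)$, so $\widetilde{\Lambda}_{q_1,q_2}(t_0 u_2)|_G$ pairs with $t_0 u_1|_G \in H^{-1/2}(G)$.)

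Finally I would estimate both sides. On the left, $u_2\overline{u_1} = e^{(\rho_1+\rho_2)\cdot x}(1+\psi_1)(1+\overline{\psi_2}) = e^{-i\kappa\cdot x}(1 + \psi_1 + \overline{\psi_2} + \psi_1\overline{\psi_2})$, so
\[
\int_\Omega (q_2-q_1)e^{-i\kappa\cdot x}\,\dd x = \int_\Omega (q_2-q_1)u_2\overline{u_1}\,\dd x - \int_\Omega (q_2-q_1)e^{-i\kappa\cdot x}(\psi_1 + \overline{\psi_2} + \psi_1\overline{\psi_2})\,\dd x,
\]
and the second integral is $O(\tau^{-1/2})$ by Cauchy–Schwarz, the $L^\infty$-bound $\|q_2-q_1\|_{L^\infty} \leq 2\delta$, and $\|\psi_j\|_{L^2} \leq C\tau^{-1/2}$ (the quadratic term being $O(\tau^{-1})$). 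On the right, $|\langle \widetilde{\Lambda}_{q_1,q_2}(t_0 u_2), t_0 u_1|_G\rangle| \leq \|\widetilde{\Lambda}_{q_1,q_2}\|\,\|t_0 u_2\|_{H^{-1/2}(\Gamma)}\,\|t_0 u_1\|_{H^{-1/2}(\Gamma)}$, and by Lemma \ref{lemma2.1} each trace norm is bounded by $\|u_j\|_{H_\Delta(\Omega)} \leq C e^{d\tau}(1 + \|\psi_j\|_{H_\Delta})$; since $\Delta u_j = q_j u_j$, one gets $\|u_j\|_{H_\Delta(\Omega)} \leq C e^{d\tau}$ with $d = \max_{\overline\Omega}|x|$, giving the factor $e^{2d\tau}$. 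Combining yields the claimed inequality with $C$ depending only on $\Omega$, $\delta$, $\epsilon$. I expect the main obstacle to be the bookkeeping in the Alessandrini identity at the level of $H_\Delta(\Omega)$-regularity — verifying that all the dualities in Lemma \ref{lm2} make sense when $u_1, u_2$ are only transposition solutions, and that the boundary pairing genuinely collapses to $\widetilde{\Lambda}_{q_1,q_2}$ because the supports of $t_0 u_1$, $t_0 u_2$ sit inside $G$ and $F$ respectively; the exponential weight tracking ($r < 2\tau$ so that the phases split into null vectors of real-part size $\tau$, and the $e^{2d\tau}$ bound on traces) is the other place where care is needed.
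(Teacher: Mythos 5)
Your proposal follows the paper's proof essentially step for step: the same null phases $\rho_j$ with real part $\pm\tau\zeta$ and $\rho_1+\rho_2=-i\kappa$ (the role of $r<2\tau$ is exactly as you say), the same double invocation of Proposition \ref{pr-cgo} with $\pm\zeta$, the same support bookkeeping $F^\epsilon(\zeta)\subset F^{2\epsilon}(\xi)\subset F$ and $G^\epsilon(\zeta)\subset G^{2\epsilon}(\xi)\subset G$, the same $e^{2d\tau}$ trace bound via Lemma \ref{lemma2.1}, and the same $O(\tau^{-1/2})$ remainder from the $\psi_j$.

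The one place where your written formulas do not cohere is the conjugation in the Alessandrini identity. The Green formula of Lemma \ref{lm2} is sesquilinear, so pairing $u_1$ against the corrector literally produces the product $u_2\,\overline{u_1}$ that you wrote down; but $\overline{u_1}=e^{\overline{\rho_1}\cdot x}(1+\overline{\psi_1})$, and $\overline{\rho_1}+\rho_2=-i\ell$, not $-i\kappa$. As literally stated, your identity would therefore control $\widehat{q}(\ell)$ (with $\ell\perp\kappa$ and $|\ell|^2=4\tau^2-|\kappa|^2$) rather than $\widehat{q}(\kappa)$, which would wreck the rest of the argument; your subsequent line $u_2\overline{u_1}=e^{(\rho_1+\rho_2)\cdot x}(1+\psi_1)(1+\overline{\psi_2})$ silently drops the bar on $\rho_1$ and $\psi_1$. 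The paper avoids this by applying Lemma \ref{lm2} with $u=u_1$ and the test function $v=\overline{w}$ (where $w=\mathcal{S}_{q_1,0}((q_1-q_2)u_2)\in\cH$, so $\overline{w}\in H^2(\Omega)$ as required), so that the conjugations cancel and the identity reads $\int_\Omega (q_2-q_1)u_1u_2\,\dd x=\langle t_0u_1,t_1\overline{w}\rangle_{1/2}$ with the genuine product $u_1u_2=e^{-i\kappa\cdot x}(1+\psi_1)(1+\psi_2)$. With that one-line repair your argument is the paper's argument.
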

\begin{proof}
Fix $r \in (0,2 \tau)$, $\kappa \in r \cT^* \cB_\epsilon$, and let $\zeta$ be given by Lemma \ref{lm-zeta}. Pick $\ell \in \R^n$ such that $\ell \cdot \kappa = \ell \cdot \zeta =0$ (which is possible since $n \geq 3$) and $| \kappa+\ell |^2=| \kappa |^2+| \ell |^2 = 4 \tau ^2$. Set
$$ \rho_j = (-1)^j \tau \zeta -i \frac{\kappa + (-1)^j \ell}{2},\ j=1,2, $$
and let $u_j=e^{\rho_j \cdot x}(1+\psi_j) \in H_\Delta (\Omega )$ be defined in accordance with Proposition \ref{pr-cgo}, in such a way that we have
\bel{p0}
\left\{ \begin{array}{ll} 
(-\Delta +q_j) u_j=0 & \mbox{\rm in}\ \Omega, \\
u_j=0 & \mbox{\rm on}\ \Gamma_-^\epsilon((2j-3) \zeta),
\end{array} \right.
\ee
and 
\bel{p0b}
\| \psi_j \|_{L^2(\Omega)}\leq C \tau ^{-1/2}.
\ee
Put $u=S_{q_1,1}^t(t_0 u_2)$ so we have $w=u-u_2=S_{q_1,0}((q_1-q_2) u_2) \in \cH$ and
\bel{4.1}
t_1 w=\Lambda_{q_1}(t_0u_2)-\Lambda_{q_2}(t_0 u_2).
\ee
Upon applying the generalized Green formula of Lemma \ref{lm2} with $u=u_1$ and $v=\overline{w}$, we derive from \eqref{p0} that
\begin{equation}
\label{4.2}
\int_\Omega (q_2-q_1) u_1 u_2 \dd x =\langle t_0 u_1 , t_1 \overline{w} \rangle_{1/2}.
\end{equation}
Notice from the second line of \eqref{p0} that the trace $t_0 u_2$ (resp., $t_0 u_1$) is supported in $F^\epsilon(\zeta)=\Gamma \setminus \Gamma_-^\epsilon(\zeta)$ (resp., $G^\epsilon(\zeta)=\Gamma \setminus \Gamma_-^\epsilon(-\zeta)$), where 
$\Gamma_-^\epsilon(\pm \zeta)$ is defined in Proposition \ref{pr-cgo}. Otherwise stated we have $F^\epsilon(\zeta)=\{ x \in \Gamma;\ \zeta \cdot \nu (x) \geq -\epsilon \}$
(resp., $G^\epsilon(\zeta)=\{ x\in \Gamma;\ \zeta \cdot \nu (x)\leq \epsilon \}$) and hence
$F^\epsilon(\zeta) \subset F^{2\epsilon}(\xi)$ (resp., $G^\epsilon(\zeta)\subset G^{2\epsilon}(\xi)$)
since $|\zeta -\xi|< \epsilon$. From this and \eqref{pm1} then follows that $\mbox{supp} (t_0 u_2) \subset F$ (resp., $\mbox{supp} (t_0 u_1) \subset G$), which together with \eqref{4.1}-\eqref{4.2} yields
\begin{equation}
\label{4.3}
\left| \int_\Omega (q_2 - q_1 )u_1 u_2 \dd x \right| \leq \| \widetilde{\Lambda}_{q_1,q_2} \|\|t _0 u_1\|_{H^{-1/2}(\Gamma )} \|t _0 u_2\|_{H^{-1/2}(\Gamma )}.
\end{equation}
%
Moreover, by \eqref{p0}-\eqref{p0b} and the very definition of $u_j$, $j=1,2$, we get that
\bea
\label{p0c}
\|t_0 u_j\|_{H^{-1/2}(\Gamma)} & \leq & c_j \left( \|u_j\|_{L^2(\Omega)}+\| \Delta u_j\|_{L^2(\Omega)} \right) 
\leq  c_j \left( \|u_j\|_{L^2(\Omega )} + \| q_j u_j \|_{L^2(\Omega )} \right) \\
& \leq & C e^{d \tau} (1+\tau^{-1/2}). \nonumber
\eea
Upon possibly substituting $\max(1, \tau_0(\delta))$ for $\tau_0(\delta)$ (which does obviously not restrict the generality of the above reasoning) we deduce from \eqref{4.3}-\eqref{p0c} that
\begin{equation}
\label{4.4}
\left| \int_\Omega (q_2-q_1) u_1 u_2 \dd x \right| \leq C e^{2 d \tau} \| \widetilde{\Lambda}_{q_1,q_2} \|.
\end{equation}
Now the desired result follows readily from \eqref{p0b} and \eqref{4.4} upon taking into account that $u_1 u_2=e^{-i \kappa \cdot x}(1+\psi_1)(1+\psi_2)$.
\end{proof}

\subsection{Bounding the Fourier coefficients}

We introduce the function $q : \R^n \to \R$ by setting $q=(q_2-q_1)\chi _\Omega$, where $\chi_\Omega$ denotes the characteristic function of $\Omega$ in $\R^n$. We aim to upper bound the Fourier transform $\widehat{q}$ of $q$, on the unit ball $\mathbb{B}$ of $\R^n$, by means of the following direct generalization of \cite[Theorem 4]{AEWZ}
for complex-valued real-analytic functions.
\begin{theorem}
\label{thm-cpl}
Assume that the function $F:2\mathbb{B}^n\rightarrow \mathbb{C}$ is real-analytic and satisfies the condition
$$
|\partial ^\alpha F(\kappa)|\leq K \frac{|\alpha |!}{\rho  ^{|\alpha |}},\ \kappa \in 2\mathbb{B},\ \alpha \in \mathbb{N}^n,
$$
for some $(K,\rho) \in \R_+^* \times (0,1]$.
Then for any measurable set $E \subset \mathbb{B}$ with positive Lebesgue measure, there exist two constants $M=M(\rho ,|E|)>0$ and $\theta =\theta(\rho ,|E|)\in (0,1)$ such that we have
$$
\|F\|_{L^\infty (\mathbb{B})} \leq M K^{1-\theta} \left( \frac{1}{|E|}\int_E |F(\kappa)| \dd \kappa \right)^\theta.
$$
\end{theorem}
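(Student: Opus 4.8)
The plan is to reduce Theorem \ref{thm-cpl} to the real-valued case treated in \cite[Theorem 4]{AEWZ} by splitting $F$ into its real and imaginary parts and then tracking how the constants combine. Write $F = F_1 + i F_2$ with $F_1 = \Pre{F}$ and $F_2 = \Pim{F}$. Since $F$ is real-analytic on $2\mathbb{B}$, so are $F_1$ and $F_2$, and from $|\partial^\alpha F_k(\kappa)| \leq |\partial^\alpha F(\kappa)|$ for $k=1,2$ we immediately inherit the Gevrey-type bound
$$
|\partial^\alpha F_k(\kappa)| \leq K \frac{|\alpha|!}{\rho^{|\alpha|}},\quad \kappa \in 2\mathbb{B},\ \alpha \in \mathbb{N}^n,\ k=1,2.
$$
Thus each $F_k$ satisfies the hypotheses of the real version of the theorem with the \emph{same} pair $(K,\rho)$.

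Next I would apply \cite[Theorem 4]{AEWZ} to $F_1$ and to $F_2$ separately, for the given measurable set $E \subset \mathbb{B}$ with $|E| > 0$. This produces constants $M_0 = M_0(\rho,|E|) > 0$ and $\theta = \theta(\rho,|E|) \in (0,1)$ — note the real statement yields a single $\theta$ depending only on $\rho$ and $|E|$, so it is the same for both $F_1$ and $F_2$ — such that
$$
\|F_k\|_{L^\infty(\mathbb{B})} \leq M_0 K^{1-\theta}\left( \frac{1}{|E|}\int_E |F_k(\kappa)|\, \dd\kappa \right)^\theta,\quad k=1,2.
$$
Then I would combine the two estimates using $\|F\|_{L^\infty(\mathbb{B})} \leq \|F_1\|_{L^\infty(\mathbb{B})} + \|F_2\|_{L^\infty(\mathbb{B})}$ together with $|F_k| \leq |F|$ pointwise, which gives
$$
\|F\|_{L^\infty(\mathbb{B})} \leq 2 M_0 K^{1-\theta}\left( \frac{1}{|E|}\int_E |F(\kappa)|\, \dd\kappa \right)^\theta,
$$
so that the claim holds with $M = 2 M_0$ and the same $\theta$.

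The only genuinely delicate point is making sure the real-variable theorem of \cite{AEWZ} is quoted in exactly the form needed, in particular that the exponent $\theta$ and the multiplicative constant $M_0$ depend \emph{only} on $\rho$ and $|E|$ (and on $n$), and not, say, on $K$ or on finer geometric features of $E$ — this is what allows the two applications to share the same $\theta$ and be added cleanly. If instead the cited result allowed $\theta$ to depend on $F_k$ individually, one would take the smaller of the two exponents and re-estimate the larger power by the smaller one using $|E|^{-1}\int_E |F| \leq \|F\|_{L^\infty(\mathbb{B})}$ and a harmless enlargement of the constant; but in the homogeneous form stated here no such adjustment is needed. I expect this to be entirely routine given the real case, which is why the theorem is labelled a "direct generalization"; the substantive analytic work is already contained in \cite{AEWZ}.
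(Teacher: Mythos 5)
Your reduction is correct: splitting $F$ into real and imaginary parts, noting $|\partial^\alpha F_k|\leq|\partial^\alpha F|$ and $|F_k|\leq|F|$, applying \cite[Theorem 4]{AEWZ} to each part with the same $(K,\rho)$ (hence the same $\theta(\rho,|E|)$), and summing with $M=2M_0$ is exactly the intended argument. The paper itself gives no proof, presenting the statement only as a "direct generalization" of the real-valued result, and your write-up supplies precisely that generalization.
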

The result is as follows. 

\begin{lemma}
\label{lm-fo}
For all $\epsilon \in (0,\epsilon_0]$ there exist two constants $C=C(\Omega,\delta,\epsilon)>0$ and $\theta=\theta(\Omega,\epsilon) \in (0,1)$, such that we have
$$
\left| \widehat{q}(\kappa) \right|\leq C e^{(1-\theta) r} \left(e^{d \tau} \| \widetilde{\Lambda}_{q_1,q_2} \| + \tau ^{-1/2} \right)^\theta,\ 
\kappa \in r \mathbb{B},\ r \in (0,2 \tau),\ \tau \in [\tau_0,+\infty).
$$
\end{lemma}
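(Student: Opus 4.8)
The plan is to apply Theorem \ref{thm-cpl} to the function $F(\kappa) = \widehat{q}(r\kappa')$ for an appropriate choice of complex-analytic extension, exploiting the fact that $q$ has compact support in $\overline{\Omega}$, so that $\widehat{q}$ extends to an entire function on $\C^n$ with controlled growth. First I would fix $\epsilon \in (0,\epsilon_0]$, $\tau \geq \tau_0$, and $r \in (0,2\tau)$, and set $E = \cT^* \cB_\epsilon \subset \mathbb{B}$, which has positive Lebesgue measure by \eqref{s1} (the measure of $\cT^* \cB_\epsilon$ equals that of $\cB_\epsilon$ since $\cT$ is orthogonal, and $\cB_\epsilon \subset \mathbb{B}$ because $s \in (0,1)$). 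I would then introduce the rescaled function $F_r(\kappa) = \widehat{q}(r\kappa)$ for $\kappa \in 2\mathbb{B}$. Since $q = (q_2-q_1)\chi_\Omega$ is supported in $\overline{\Omega} \subset d\mathbb{B}$ and $\|q\|_{L^1(\R^n)} \leq C(\Omega) \delta$, differentiating under the integral sign gives
$$
|\partial^\alpha F_r(\kappa)| = \left| \int_\Omega (q_2-q_1)(x) (-irx)^\alpha e^{-ir\kappa \cdot x} \dd x \right| \leq C(\Omega)\delta\, (rd)^{|\alpha|} \leq C(\Omega)\delta\, (2\tau d)^{|\alpha|},
$$
for $\kappa \in 2\mathbb{B}$. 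To match the hypothesis $|\partial^\alpha F(\kappa)| \leq K |\alpha|!/\rho^{|\alpha|}$ of Theorem \ref{thm-cpl} with $\rho \in (0,1]$ independent of $\tau$, I would take $\rho = \rho(\Omega) = \min(1, (2d)^{-1})$ — wait, this still leaves a $\tau^{|\alpha|}$ factor. The correct device is instead to rescale differently: set $F(\kappa) = \widehat{q}(\kappa)$ directly on $2\mathbb{B}$ (no $r$-rescaling), so that $|\partial^\alpha F(\kappa)| \leq C(\Omega)\delta\, d^{|\alpha|} \leq C(\Omega)\delta\, |\alpha|!/\rho^{|\alpha|}$ with $\rho = \min(1,1/d)$ and $K = C(\Omega)\delta$, both depending only on $\Omega$ and $\delta$.

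Applying Theorem \ref{thm-cpl} with this $F$, the set $E = \cT^*\cB_\epsilon$, and these $K,\rho$ yields constants $M = M(\Omega,\epsilon)>0$ and $\theta = \theta(\Omega,\epsilon) \in (0,1)$ such that
$$
\|\widehat{q}\|_{L^\infty(\mathbb{B})} \leq M (C(\Omega)\delta)^{1-\theta} \left( \frac{1}{|\cT^*\cB_\epsilon|} \int_{\cT^*\cB_\epsilon} |\widehat{q}(\kappa)| \dd \kappa \right)^\theta.
$$
Since $\widehat{q}(\kappa) = \int_\Omega (q_2-q_1) e^{-i\kappa \cdot x}\dd x$, Lemma \ref{lm-ale} (applied with $r=1 < 2\tau$, which is legitimate once we take $\tau_0 \geq 1$ as arranged in the proof of Lemma \ref{lm-ale}) bounds $|\widehat{q}(\kappa)|$ by $C(e^{2d\tau}\|\widetilde{\Lambda}_{q_1,q_2}\| + \tau^{-1/2})$ uniformly on $\cT^*\cB_\epsilon$; hence the integral average is bounded by the same quantity, giving
$$
\|\widehat{q}\|_{L^\infty(\mathbb{B})} \leq C(\Omega,\delta,\epsilon) \left( e^{2d\tau}\|\widetilde{\Lambda}_{q_1,q_2}\| + \tau^{-1/2} \right)^\theta.
$$

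Finally, to recover the $r$-dependence and the stated exponential factor $e^{(1-\theta)r}$, I would use the scaling/analyticity of $\widehat{q}$ once more: for $\kappa \in r\mathbb{B}$ with $r \in (0,2\tau)$, write $\kappa = r\kappa'$ with $\kappa' \in \mathbb{B}$ and apply the three-ball/Hadamard-type estimate, or more simply observe that the Phragmén–Lindelöf bound $|\widehat{q}(\kappa)| \leq \|q\|_{L^1} e^{d|\mathrm{Im}\,\kappa|}$ together with the interpolation already packaged in Theorem \ref{thm-cpl} (whose constant $M$ is dimension- and geometry-dependent only) propagates the $L^\infty(\mathbb{B})$ bound to $r\mathbb{B}$ at the cost of a factor $e^{(1-\theta)\cdot O(r)}$; absorbing $d$ into $C$ and adjusting $\tau_0$ so that $e^{d\tau} \leq e^{d\tau}$ trivially, one rewrites $e^{2d\tau}$ as $e^{d\tau}\cdot e^{d\tau}$ and absorbs one copy. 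The cleanest route is: apply Theorem \ref{thm-cpl} to the dilated function $\kappa \mapsto \widehat{q}(\tfrac{r}{2}\kappa)$ on $2\mathbb{B}$, whose derivative bound is $K_r = C(\Omega)\delta$, $\rho_r = \min(1, 2/(rd))$; track that $M(\rho_r,|E|)$ and $\theta(\rho_r,|E|)$ degrade like $e^{O(r)}$ and stay bounded below respectively as $r$ grows, producing precisely the $e^{(1-\theta)r}$ prefactor with $\theta = \theta(\Omega,\epsilon)$ now chosen uniformly. I expect the main obstacle to be exactly this last bookkeeping: verifying that the constants $M$ and $\theta$ produced by Theorem \ref{thm-cpl} can be taken uniform in $r$ (equivalently, tracking how the Brunn–Minkowski / analyticity constants in \cite{AEWZ} scale under dilation of the domain), and matching the $e^{(1-\theta)r}$ growth rate exactly rather than merely $e^{Cr}$; everything else is a routine combination of differentiation under the integral sign and Lemma \ref{lm-ale}.
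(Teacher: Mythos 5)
There is a genuine gap, and it sits exactly where you flagged it. Your first instinct --- apply Theorem \ref{thm-cpl} to $F(\kappa)=\widehat{q}(r\kappa)$ on $2\mathbb{B}$ with $E=\cT^*\cB_\epsilon$ and feed the average over $E$ into Lemma \ref{lm-ale} --- is the paper's argument, and you should not have abandoned it when you saw the $(rd)^{|\alpha|}$ factor. That bound \emph{does} fit the hypothesis of Theorem \ref{thm-cpl} with $\rho=d^{-1}$ independent of $r$ and $\tau$: the missing observation is simply $r^{|\alpha|}\leq |\alpha|!\,e^{r}$, so that
\[
\|q\|_{L^1(\Omega)}\,r^{|\alpha|}d^{|\alpha|}=\Bigl(\|q\|_{L^1(\Omega)}\frac{r^{|\alpha|}}{|\alpha|!}\Bigr)\frac{|\alpha|!}{(d^{-1})^{|\alpha|}}\leq \bigl(\|q\|_{L^1(\Omega)}e^{r}\bigr)\frac{|\alpha|!}{(d^{-1})^{|\alpha|}} .
\]
In other words, the $r$-dependence is pushed entirely into the constant $K=\|q\|_{L^1(\Omega)}e^{r}$ while $\rho$ stays fixed. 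Theorem \ref{thm-cpl} then returns $M$ and $\theta$ depending only on $d$ and $|\cB_\epsilon|$, and the factor $K^{1-\theta}=\|q\|_{L^1(\Omega)}^{1-\theta}e^{(1-\theta)r}$ is precisely the origin of the $e^{(1-\theta)r}$ prefactor in the statement; combining this with the bound of Lemma \ref{lm-ale} on $r\cT^*\cB_\epsilon$ finishes the proof in one line.

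Neither of your two substitutes closes the gap. Taking $F=\widehat{q}$ with no rescaling only controls $\|\widehat q\|_{L^\infty(\mathbb{B})}$, whereas the lemma requires a bound on $r\mathbb{B}$ for every $r\in(0,2\tau)$ and $r$ may be as large as $2\tau$; since Theorem \ref{thm-cpl} is formulated on the fixed ball $2\mathbb{B}$, some dilation is unavoidable. The fallback of applying the theorem with $\rho_r=\min(1,2/(rd))\to 0$ would require knowing that $M(\rho,|E|)$ grows no faster than $e^{O(1/\rho)}$ and, more seriously, that $\theta(\rho,|E|)$ stays bounded away from $0$ as $\rho\to 0$. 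The theorem as stated gives no such uniformity, and one should not expect it: $\theta$ is a Hadamard-type interpolation exponent that degenerates as the analyticity radius shrinks, and if $\theta=\theta(r)\to 0$ the final estimate loses all content. So the ``bookkeeping'' you defer at the end is not bookkeeping but the crux, and it is resolved by the elementary inequality above rather than by tracking how the constants of \cite{AEWZ} scale under dilation.
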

\begin{proof}
Let $\tau \in [\tau_0,+\infty)$ and $r \in (0, 2 \tau)$ be fixed. By Lemma \ref{lm-ale}, we have
\bel{4.8}
\left| \widehat{q}(r \kappa) \right|\leq C \left (e^{2 d \tau} \| \widetilde{\Lambda}_{q_1,q_2} \|+ \tau ^{-1/2}\right),\ \kappa \in \cT^* \cB_\epsilon.
\ee
In view of \eqref{s1} we apply Theorem \ref{thm-cpl} with $E=\cT^* \cB_\epsilon$ and $F(\kappa)=\widehat{q}(r \kappa)$ for $\kappa \in 2 \mathbb{B}^n$.
Indeed, in this particular case it holds true for any $\alpha=(\alpha_1,\ldots,\alpha_n) \in \N^n$ and $\kappa \in 2 \mathbb{B}^n$ that
$$
\partial^\alpha F(\kappa)=\int_\Omega q(x) (-ir)^{|\alpha|} \left( \prod_{k=1}^n x_k^{\alpha_k} \right) e^{-i r \kappa \cdot x} \dd x_1 \ldots \dd x_n,
$$
whence 
\bel{f1}
|\partial ^\alpha F(\kappa)| \leq \|q\|_{L^1(\Omega )}r^{|\alpha |}d^{|\alpha |} \leq \left( \|q\|_{L^1(\Omega )}\frac{r^{|\alpha |}}{|\alpha |!} \right) \frac{|\alpha |!}{(d^{-1})^{|\alpha |}} \leq \left( \|q\|_{L^1(\Omega )}e^{r}\right ) \frac{|\alpha |!}{(d^{-1})^{|\alpha |}},
\ee
where we recall that $d=d(\Omega)=\max_{x \in \overline{\Omega}} |x|$. Thus, with reference to \eqref{4.8}-\eqref{f1}, we obtain that
\bel{4.10}
\left| \widehat{q}(r \kappa) \right|\leq C e^{(1-\theta)r} \left( e^{d \tau} \| \widetilde{\Lambda}_{q_1,q_2} \| + \tau ^{-1/2}\right)^\theta,\ \kappa \in \mathbb{B},
\ee
which immediately yields the result.
\end{proof}

\subsection{Stability inequalities}
We turn now to proving the stability inequalities \eqref{1.1}-\eqref{1.2}. We start with \eqref{1.1}. For $t>0$ fixed, we assume that $q \in H^t(\mathbb{R}^n)$ and put $M=\|q\|_{H^t(\mathbb{R}^n)}$. By Parseval inequality, it holds true for every $r>0$ that
\bea
\|q\|_{L^2(\Omega )}^2 = \|\widehat{q}\|_{L^2(\R^n)}^2 & = &\int_{|\kappa|\leq r}|\widehat{q}(\kappa)|^2dk+\int_{|\kappa|> r}|\widehat{q}(\kappa)|^2dk \label{4.12}  \\
&\leq & \int_{|\kappa|\leq r}|\widehat{q}(\kappa)|^2dk+\frac{1}{r^{2t}}\int_{|\kappa|> r}|(1+|\kappa|^2)^{t}|\widehat{q}(\kappa)|^2\dd \kappa \nonumber \\
&\leq & \int_{|\kappa|\leq r}|\widehat{q}(\kappa)|^2dk+\frac{M^2}{r^{2t}}, \nonumber
\eea
since $\int_{\R^n}|(1+|\kappa|^2)^{t}|\widehat{q}(\kappa)|^2 \dd \kappa = \| q \|_{H^t(\Omega)}^2$.
From this and Lemma \ref{lm-fo} then follows that
\bel{4.13}
\|q\|_{L^2(\Omega )}^2 \leq Cr^{n}e^{2(1-\theta )r}\left(e^{d\tau} \|\widetilde{\Lambda}_{q_1,q_2}\|+ \tau ^{-1/2}\right)^{2\theta} +\frac{M^2}{r^{2t}},\  r \in (0, 2 \tau), \tau \in [ \tau_0 , +\infty).
\ee
Let us suppose that 
\bel{se0}
\| \widetilde{\Lambda}_{q_1,q_2} \| \in (0,\gamma_0),
\ee
where $\gamma_0=\upsilon(\tau_0)$ and $\upsilon(\tau)=\tau ^{-1/2} e^{-d \tau}$ for $\tau \in (0,+\infty)$. Notice that $\gamma_0 \in (0,1)$ since $\tau_0 \in [1,+\infty)$. Moreover, $\upsilon$ being a strictly decreasing function on $[\tau_0,+\infty)$, there exists a unique $\tau_* \in (\tau_0,+\infty)$ satisfying $\upsilon(\tau_*)= \| \widetilde{\Lambda}_{q_1,q_2} \|$. By elementary computation we find that
$\tau_* = \left( 2 \left| \ln \| \widetilde{\Lambda}_{q_1,q_2} \| \right| -  \ln \tau_* \right) \slash (2 d)$, which entails
$\tau_* < \left| \ln \| \widetilde{\Lambda}_{q_1,q_2} \| \right|$, since $\tau_* \in (1,+\infty)$ and $d \in [1,+\infty)$. As a consequence the real number $\tau_*$ is greater than $\left (2\left| \ln \| \widetilde{\Lambda}_{q_1,q_2} \| \right|-  \ln \left| \ln \| \widetilde{\Lambda}_{q_1,q_2} \| \right| \right) \slash (2 d)$, so we get that
\bel{se2}
\tau_* > \frac{\left| \ln \| \widetilde{\Lambda}_{q_1,q_2} \| \right|}{2 d},
\ee
upon recalling that $\ln x < x $ for all $x \in (0,+\infty)$.
Further, taking $\tau=\tau_*$ in \eqref{4.13} we obtain for each $r \in (0, 2 \tau_*)$ that
$\|q\|_{L^2(\Omega )}^2 \leq C 2^{2 \theta}  r^n e^{2 r} \tau_*^{- \theta} + M^2 \slash r^{2t} \leq  C' e^{(n+2) r} (2d \tau_*)^{- \theta} +M^2 \slash r^{2t}$, where $C'=C 2^{3 \theta} d^\theta$.
This and \eqref{se2} entail
\bel{se3}
\|q\|_{L^2(\Omega )}^2 \leq C' e^{(n+2) r} \left| \ln \| \widetilde{\Lambda}_{q_1,q_2} \| \right|^{-\theta} + \frac{M^2}{r^{2t}},\ r \in (0, 2 \tau_*).
\ee
The next step of the derivation involves finding $r_* \in (0, 2 \tau_*)$ such that both terms $C' e^{(n+2) r_*} \left| \ln \| \widetilde{\Lambda}_{q_1,q_2} \| \right|^{-\theta}$ and $M^2 \slash r_*^{2t}$ appearing in the right hand side of \eqref{se3} are equal. This can be achieved upon assuming in addition to \eqref{se0} that $\| \tilde{\Lambda}_{q_1,q_2} \| \in (0,\gamma_1)$, where $\gamma_1=e^{-\left( \frac{M^2}{(n+2) C'} \right)^{\frac{1}{1+2 t - \theta}}}$, in such a way that we have
\bel{se4}
(n+2) \left| \ln \| \widetilde{\Lambda}_{q_1,q_2} \| \right|^{1+2t-\theta} \geq \frac{M^2}{C'}.
\ee
Indeed, we see from \eqref{se2} that the function $\iota_d : r \mapsto (d r)^{2t} e^{(n+2) d r}$ satisfies
$$
\iota_d(2 \tau_*) > \left| \ln \| \widetilde{\Lambda}_{q_1,q_2} \| \right|^{2t} e^{(n+2) \left| \ln \| \widetilde{\Lambda}_{q_1,q_2} \| \right|} > \left| \ln \| \widetilde{\Lambda}_{q_1,q_2} \| \right|^{2t} + (n+2) \left| \ln \| \widetilde{\Lambda}_{q_1,q_2} \| \right|^{1+2t},
$$
and hence
$\iota_d(2 \tau_*) > ( M^2 \slash C' )  \left| \ln \| \widetilde{\Lambda}_{q_1,q_2} \| \right|^\theta$
according to \eqref{se4}. Thus there exists $r_* \in (0,2 \tau_*)$ such that we have
$\iota_d(r_*)= ( M^2 \slash C') \left| \ln \| \widetilde{\Lambda}_{q_1,q_2} \| \right|^{\theta}$.
This entails that
$(2t+n+2) d r_* \geq 2 t \ln (d r_*) + (n+2) d r_*$ is greater that $\ln \left( ( M^2 \slash C') \left| \ln \| \widetilde{\Lambda}_{q_1,q_2} \| \right|^\theta \right)$, 
which combined with the estimate $\|q\|_{L^2(\Omega )}^2 \leq 2 M^2 \slash r_*^{2t}$, yields
\bel{se5}
\|q\|_{L^2(\Omega )}\leq 2M \left( \frac{2t+n +2}{\theta} \right)^t \left| \ln \left( \left( \frac{M^2}{C'} \right)^{1 \slash \theta} \left| \ln \| \widetilde{\Lambda}_{q_1,q_2} \|\right| \right) \right|^{-t}.
\ee
Summing up, we obtain \eqref{1.1} with $c=2M ( (2t+n +2) \slash \theta )^t$ and $\tilde{c}=(M^2 \slash C')^{1 \slash \theta}$ provided $\| \widetilde{\Lambda}_{q_1,q_2} \|$ is smaller that $\gamma = \min_{j=0,1} \gamma_j$.

On the other hand, in the particular case where $\| \widetilde{\Lambda}_{q_1,q_2} \| \geq \gamma$, we have
$\|q\|_{L^2(\Omega )} \leq (M \slash \gamma) \| \widetilde{\Lambda}_{q_1,q_2} \|$. Putting this together with \eqref{se5} we end up getting \eqref{1.1} upon possibly enlarging $c$.

Finally, we obtain \eqref{1.2} by arguing as in the derivation of \eqref{1.1} upon preliminarily substituting the estimate
\beas
\| q \|_{H^{-1}(\Omega)} & =& \int_{|\kappa|\leq r}|(1+|\kappa|^2)^{-1}|\widehat{q}(\kappa)|^2\dd \kappa + \int_{|\kappa|> r}|(1+|\kappa|^2)^{-1}|\widehat{q}(\kappa)|^2 \dd \kappa  \\
& \leq & \int_{|\kappa|\leq r}|\widehat{q}(\kappa)|^2 \dd \kappa + \frac{1}{r^2} \int_{|\kappa|> r}|\widehat{q}(\kappa)|^2\dd \kappa \\
& \leq &\int_{|\kappa|\leq r} |\widehat{q}(\kappa)|^2 \dd \kappa+\frac{M^2}{r^2}
\eeas
for \eqref{4.12}. This completes the proof of Theorem \ref{thm-cpl}.

\section{Application to conductivity problem}
\label{section5}
In this section we examine the stability issue in the inverse problem of determining the conductivity coefficient $\sigma$ appearing in the system
\begin{equation}
\label{5.1}
\left\{
\begin{array}{ll}
-{\rm div}(\sigma \nabla u)=f & {\rm in}\ \Omega,
\\
u=g &{\rm on}\ \Gamma,
\end{array}
\right.
\end{equation}
from the partial DN map. The strategy is to link this inverse problem to the one studied in the first four sections of this paper and then apply Theorem \ref{thm-main} in order to derive a suitable stability estimate for $\sigma$.

Assume that $\sigma \in W_+^{1,\infty}(\Omega )=\{ c \in W^{1,\infty}(\Omega;\R);\ c(x)\geq c_0\ \mbox{for some}\ c_0>0 \}$.
Then for any $(f,g)\in L^2(\Omega )\times H^{3/2}(\Gamma)$, we know from the standard elliptic theory that \eqref{5.1} admits a unique solution $\mathcal{S}_\sigma (f,g)\in H^2(\Omega)$, and that the linear operator
\[
\mathcal{S}_\sigma : (f,g)\in L^2(\Omega) \times H^{3/2}(\Gamma ) \mapsto \mathcal{S}_\sigma (f,g)\in H^2(\Omega)
\]
is bounded. In the more general case where $(f,g)\in \mathcal{H}^* \times H^{-1/2}(\Gamma)$, we obtain by arguing in the exact same way as in Section \ref{section2} that there exists a unique $u \in L^2(\Omega)$ obeying
\begin{equation}
\label{5.2}
-\int_\Omega u {\rm div}(\sigma \nabla \overline {v}) \dd x = \langle f, v \rangle -\langle g , \sigma \partial _\nu v \rangle_{1/2},\ v\in \mathcal{H}.
\end{equation} 
Such a function $u$ will be referred to as the transposition solution of \eqref{5.1} and will be denoted by 
$\mathcal{S}^t_\sigma(f,g)$.

Let us introduce the Hilbert space $H_{{\rm div}(\sigma \nabla)}(\Omega)=\{ u \in L^2(\Omega),\ {\rm div}(\sigma \nabla u) \in L^2(\Omega) \}$, endowed with the norm
$\|u\|_{H_{{\rm div}(\sigma \nabla)}(\Omega)}=\left( \|u\|_{L^2(\Omega)}^2+ \| {\rm div}(\sigma \nabla u) \|_{L^2(\Omega)}^2 \right)^{1/2}$. By a slight modification of the proof of Lemma \ref{lemma2.1} (e.g. \cite{BU}), the trace map
$$
t_j^\sigma u=\sigma^j \partial_\nu^j u_{|\Gamma},\ u \in \mathscr{D}(\overline{\Omega}),\ j=0,1, 
$$
is extended to a linear continuous operator, still denoted by
$t_j^\sigma$, from $H_{{\rm div}(\sigma \nabla)}(\Omega)$ into $H^{-j-1/2}(\Gamma)$. 
Thus, bearing in mind that $\mathcal{S}^t(0,g) \in H_{\Delta}(\Omega)$ for $g \in H^{-1/2}(\Gamma)$, we see that the DN map
$$ \Lambda_\sigma : g \in H^{-1/2}(\Gamma) \mapsto t_1^\sigma \mathcal{S}^t(0,g) \in H^{-3/2}(\Gamma), $$
is a bounded operator.

Assume that $\sigma \in W_+^{2,\infty}(\Omega )=W^{2,\infty}(\Omega )\cap W_+^{1,\infty}(\Omega )$. Taking into account that
\[
-{\rm div}\left( \sigma\nabla (\sigma^{-1/2}\overline{v}) \right)=\sigma ^{1/2}\left( -\Delta \overline{v}+\sigma^{-1/2} (\Delta \sigma^{1/2}) \overline{v}\right),\ v \in \cH,
\]
we get upon substituting $(0,\sigma^{-1/2} v)$ for $(f,v)$ in \eqref{5.2}, that
$$
\int_\Omega  \sigma ^{1/2} u \overline {(-\Delta  v+q_\sigma v)} \dd x = -\langle \sigma ^{1/2}g , \partial_\nu v \rangle,\ v \in \mathcal{H},
$$
where $q_\sigma =\sigma^{-1/2}\Delta \sigma ^{1/2}$.
As a consequence we have $\sigma ^{1/2}\mathcal{S}_\sigma^t(0,g)=\mathcal{S}_{q_\sigma}^t(0, \sigma ^{1/2}g)$, and hence
$$
\sigma ^{1/2}\mathcal{S}_\sigma^t(0,\sigma ^{-1/2}g)=\mathcal{S}_{q_\sigma}^t(0, g).
$$
From this and the identity $t_1(\sigma ^{1/2} w)= \sigma^{-1 \slash 2} t_1^{\sigma} w + \frac{1}{2}\sigma^{-1 \slash 2} (\partial _\nu \sigma) t_0w$, which is valid for every $w\in H^2(\Omega )$, and generalizes to $w \in H_{\rm{div}(\sigma \nabla)}(\Omega)$ by duality, we get that
\begin{equation}
\label{5.3}
\Lambda_{q_\sigma}=\frac{1}{2}\sigma^{-1} (\partial _\nu \sigma) I + \sigma^{-1/2} \Lambda_\sigma \sigma^{-1/2}.
\end{equation}

Now pick $\sigma_1,\sigma_2\in W_+^{2,\infty}(\Omega )$ such that $\sigma_1=\sigma _2$ on $\Gamma$ and $\partial_\nu \sigma _1=\partial_\nu \sigma_2$ on $F$. Thus, putting $q_j=q_{\sigma_j}$ for $j=1,2$, we deduce from \eqref{5.3} that
\begin{equation}
\label{5.4}
(\Lambda _{q_1}-\Lambda_{q_2})(g)=\sigma_1^{-1/2}(\Lambda _{\sigma _1}-\Lambda _{\sigma _2})(\sigma_1^{-1/2} g),\ g\in H^{-1/2}(\Gamma )\cap \mathscr{E}'(F).
\end{equation}
Let us next introduce
$$
\widetilde{\Lambda}_{\sigma_1,\sigma_2} : g \in H^{-1/2}(\Gamma )\cap \mathscr{E}'(F) \mapsto (\Lambda_{\sigma _1}-\Lambda _{\sigma _2})(g)_{|G} \in H^{1/2}(G).
$$
We notice from \eqref{5.4} that
$$
\widetilde{\Lambda}_{q_1,q_2} g = \sigma_1^{-1/2} \widetilde{\Lambda}_{\sigma_1,\sigma_2}(\sigma_1^{-1/2} g), 
$$
for every $g\in H^{-1/2}(\Gamma)\cap \mathscr{E}'(F)$, provided $\sigma_1=\sigma_2$ on $\Gamma$ and $\partial_\nu \sigma_1=\partial_\nu \sigma_2$ on $F \cap G$. 
In this case, we may find a constant $C>0$, such that we have
\begin{equation}
\label{5.5}
\| \widetilde{\Lambda}_{q _1,q _2}\|\leq C \| \widetilde{\Lambda}_{\sigma_1,\sigma_2}\|,
\end{equation}
where $\|\cdot \|$ still denotes the norm of $\mathscr{B}(H^{-1/2}(\Gamma )\cap \mathscr{E}'(F),H^{1/2}(G))$.
Here we used the fact that the multiplier by $\sigma_1^{-1/2}$ is an isomorphism of $H^{\pm 1/2}(\Gamma)$.

Finally, taking into account that $\phi =\sigma_1^{1/2}-\sigma_2^{1/2}$ is solution to the system
$$ 
\left\{ \begin{array}{ll}
(-\Delta+q_1 ) \phi =\sigma_2^{1/2}(q_2-q_1) & {\rm in}\ \Omega \\ \phi = 0 & {\rm on}\ \Gamma, \end{array} \right. 
$$
we get $\|\phi \|_{L^2(\Omega )}\leq C\|q_2-q_1\|_{H^{-1}(\Omega )}$ upon taking $f=\sigma_2^{1/2}(q_2-q_1)$ and $g=0$ in \eqref{a3}.
Thus, applying Theorem \ref{thm-main} and recalling \eqref{5.5}, we obtain the:

\begin{corollary}
\label{corollary5.1}
Let $F$ and $G$ be the same as in section \ref{section1}, and let $\delta>0$ and $\sigma_0 >0$. Then for any $\sigma_j \in \delta B_{W^{2,\infty}(\Omega )}$, $j=1,2$, obeying $\sigma_j \geq \sigma_0$ and the condition
\bel{5.7}
\sigma_1=\sigma_2\; \rm{on}\ \Gamma\ \mbox{and}\ \partial _\nu \sigma _1=\partial_\nu \sigma_2\ \rm{on}\ F\cap G,
\ee
we may find a constant $C>0$, independent of $\sigma_1$ and $\sigma_2$, such that we have: 
\[
\|\sigma_1-\sigma_2\|_{L^2(\Omega )}\leq C\left( \| \widetilde{\Lambda}_{\sigma _1,\sigma _2}\| + \left|\ln \widetilde{C}\left|\ln \| \widetilde{\Lambda}_{\sigma_1,\sigma_2}\| \right|\right|^{-1}\right).
\]
\end{corollary}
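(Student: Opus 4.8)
The plan is to transfer the result of Theorem~\ref{thm-main} from the Schr\"odinger setting to the conductivity setting by means of the substitution $q_j=q_{\sigma_j}=\sigma_j^{-1/2}\Delta\sigma_j^{1/2}$ already used in \eqref{5.3}--\eqref{5.5}. First I would check that $q_j\in\cQ$: if $(-\Delta+q_j)v=0$ with $v\in\cH$, then the algebraic identity preceding \eqref{5.3} gives ${\rm div}(\sigma_j\nabla(\sigma_j^{-1/2}v))=0$ in $\Omega$ together with $\sigma_j^{-1/2}v=0$ on $\Gamma$, whence $v=0$ by uniqueness for the conductivity equation (recall $\sigma_j\geq\sigma_0>0$). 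Moreover, since $\sigma_j\in W^{2,\infty}(\Omega)$ with $\sigma_j\geq\sigma_0$, the potential $q_j$ lies in $L^\infty(\Omega)$ with $\|q_j\|_{L^\infty(\Omega)}$ bounded by a constant depending only on $\delta$ and $\sigma_0$; in particular $q_j\in\delta' B_{L^2(\Omega)}$ for some $\delta'=\delta'(\delta,\sigma_0,\Omega)$, so estimate \eqref{1.2} of Theorem~\ref{thm-main} applies to the pair $(q_1,q_2)$ with constants depending only on $\delta$, $\sigma_0$ and $\Omega$.

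Second, I would invoke the two transfer estimates. On the one hand, condition \eqref{5.7} is exactly what is needed for the identity $\widetilde{\Lambda}_{q_1,q_2}g=\sigma_1^{-1/2}\widetilde{\Lambda}_{\sigma_1,\sigma_2}(\sigma_1^{-1/2}g)$ established just before the statement, which, together with the fact that multiplication by $\sigma_1^{-1/2}$ is an isomorphism of $H^{\pm1/2}(\Gamma)$, yields \eqref{5.5}, i.e. $\|\widetilde{\Lambda}_{q_1,q_2}\|\leq C\|\widetilde{\Lambda}_{\sigma_1,\sigma_2}\|$. On the other hand, $\phi=\sigma_1^{1/2}-\sigma_2^{1/2}$ belongs to $\cH$ and solves $(-\Delta+q_1)\phi=\sigma_2^{1/2}(q_2-q_1)$ with $\phi=0$ on $\Gamma$; by Proposition~\ref{proposition2.1} it coincides with the transposition solution $\cS_{q_1}^t(\sigma_2^{1/2}(q_2-q_1),0)$, so \eqref{a3} gives $\|\phi\|_{L^2(\Omega)}\leq C\|\sigma_2^{1/2}(q_2-q_1)\|_{\cH^*}\leq C\|q_2-q_1\|_{H^{-1}(\Omega)}$, the last inequality using that multiplication by $\sigma_2^{1/2}\in W^{1,\infty}(\Omega)$ is bounded on $H^{-1}(\Omega)$. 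Finally $\sigma_1-\sigma_2=\phi\,(\sigma_1^{1/2}+\sigma_2^{1/2})$ with $\sigma_1^{1/2}+\sigma_2^{1/2}\in L^\infty(\Omega)$, hence $\|\sigma_1-\sigma_2\|_{L^2(\Omega)}\leq C\|\phi\|_{L^2(\Omega)}\leq C\|q_1-q_2\|_{H^{-1}(\Omega)}$.

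Combining these inequalities with \eqref{1.2} gives
$$
\|\sigma_1-\sigma_2\|_{L^2(\Omega)}\leq C\Bigl(\|\widetilde{\Lambda}_{\sigma_1,\sigma_2}\|+\bigl|\ln\tilde c\bigl|\ln(C\|\widetilde{\Lambda}_{\sigma_1,\sigma_2}\|)\bigr|\bigr|^{-1}\Bigr),
$$
and it only remains to absorb the inner constant $C$ into a new constant $\widetilde C$. This last step is the routine bookkeeping already performed at the end of Section~\ref{section4}: one treats separately the case $\|\widetilde{\Lambda}_{\sigma_1,\sigma_2}\|\geq\gamma$, where the a priori bound $\|\sigma_1-\sigma_2\|_{L^2(\Omega)}\leq C\leq(C/\gamma)\|\widetilde{\Lambda}_{\sigma_1,\sigma_2}\|$ suffices, and the case $\|\widetilde{\Lambda}_{\sigma_1,\sigma_2}\|<\gamma$, where for $\gamma$ small enough $|\ln(Cx)|\geq\tfrac12|\ln x|$ and one adjusts the constant inside the iterated logarithm accordingly. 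The step requiring the most care is really the boundary-term cancellation behind \eqref{5.4}--\eqref{5.5} — namely that the traces match up under the sole hypothesis $\partial_\nu\sigma_1=\partial_\nu\sigma_2$ on $F\cap G$, which rests on the trace identity for $H_{{\rm div}(\sigma\nabla)}(\Omega)$ — but this is already established before the statement, so the proof of the corollary is essentially an assembly of previously proven facts.
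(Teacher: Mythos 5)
Your argument is correct and follows essentially the same route as the paper: the corollary is obtained by assembling \eqref{5.5}, the $L^2$ bound on $\phi=\sigma_1^{1/2}-\sigma_2^{1/2}$ via \eqref{a3}, and estimate \eqref{1.2} of Theorem \ref{thm-main}, exactly as in the closing paragraph of Section \ref{section5}. The only differences are that you make explicit a few points the paper leaves implicit (that $q_{\sigma_j}\in\mathcal{Q}\cap\delta' B_{L^2(\Omega)}$, the $H^{-1}$ multiplier bound, and the final absorption of constants), which is welcome but not a change of method.
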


\begin{remark}
\label{remark5.7}
It is not clear how to weaken assumption \eqref{5.7} in Corollary \ref{corollary5.1}. Indeed, to our knowledge, the best available result in the mathematical literature (this is a byproduct of \cite[Theorem 2.2, p. 922 and Theorem 2.4, p. 923]{AG}) on the recovery of the conductivity at the boundary, claims for any non empty open subset $\Gamma_0$ of $\Gamma$, and for all $\sigma_m$, $m=1,2$, taken as in Corollary \ref{corollary5.1} and satisfying the condition ${\rm supp}\left(\sigma _1-\sigma _2\right)_{|\Gamma}\subset \Gamma _0$ instead \eqref{5.7}, that
$$
\| \partial_\nu^j \sigma_1-\partial_\nu^j \sigma _2 \|_{L^\infty (\Gamma)} \leq C \| \Lambda^0_{\sigma_1,\sigma_2}\|^{1/(1+j)},\ j=0,1.
$$
Here $\Lambda ^0_{\sigma_1,\sigma_2}$ denotes the operator
$g\in H^{-1/2}(\Gamma )\cap \mathscr{E}'(\Gamma_0) \mapsto  (\Lambda_{\sigma_1}-\Lambda_{\sigma_2})(g)_{|\Gamma_0}\in H^{1/2}(\Gamma_0)$,
$\| \Lambda ^0_{\sigma _1,\sigma _2}\|$ is the norm of $\Lambda ^0_{\sigma_1,\sigma_2}$ in $\mathscr{B}(H^{-1/2}(\Gamma)\cap \mathscr{E}'(\Gamma_0 ),H^{1/2}(\Gamma_0))$ and $C>0$ is a constant that depends neither on $\sigma_1$ nor $\sigma_2$. 
\end{remark}


\small

\bigskip
\vskip 1cm

\end{document}